\newtheorem{theorem}{Theorem}[section]
\newtheorem{hypothesis}{Hypothesis}[section]
\newtheorem{proposition}[theorem]{Proposition}
\newcommand\restr[2]{{
  \left.\kern-\nulldelimiterspace 
  #1 
  \vphantom{\big|} 
  \right|_{#2} 
  }}
\newcommand{\HN}[1]{\textcolor{black}{#1}}
\newcommand{\reviewerone}[1]{\textcolor{black}{#1}}
\newcommand{\reviewertwo}[1]{\textcolor{black}{#1}}
\providecommand{\keywords}[1]{{#1}}
\title{A stable parareal-like method for the second order wave equation}
\author{Hieu Nguyen} \thanks{Oden Institute for Computational Engineering and Sciences, The University of Texas at Austin, TX 78712, USA. E-mail: hieu@ices.utexas.edu}
\author{Richard Tsai} \thanks{Department of Mathematics and Oden Institute for Computational Engineering and Sciences, The University of Texas at Austin, TX 78712, USA and KTH Royal Institute of Technology, Sweden. E-mail: ytsai@math.utexas.edu}
\begin{document}

\maketitle

\begin{abstract}
  A new parallel-in-time iterative method is proposed for solving the homogeneous second-order wave equation. The new method involves a coarse scale propagator, allowing for larger time steps, and a fine scale propagator which fully resolves the medium using finer spatial grid and uses shorter time steps. The fine scale propagator is run in parallel for short time intervals. The two propagators are coupled in an iterative way that resembles the standard parareal method \cite{parareal-LMT01}. We present a data-driven strategy in which the computed data gathered from each iteration are re-used to stabilize the coupling by minimizing the wave energy residual of the fine and coarse propagated solutions. \HN{Several examples, including a wave speed with discontinuities, are provided to demonstrate the effectiveness of the proposed method.}
\end{abstract}

\keywords{Keywords: parallel-in-time, wave equation, Procrustes problem}

\section{Introduction}
In this paper,  we will focus on the  initial value problem  of the standard second order wave equation:
\HN{
\begin{align} \label{eq:waveequation} 
    &u_{tt} = c^2(x)\Delta u,\,\,\, x\in[0,1)^d,0\le t <T, \\ 
    &u(x,0) =u_0(x), \nonumber\\
    &u_t(x,0) =p_0(x). \nonumber
\end{align} 
}
\HN{For boundary conditions, we consider either periodic, absorbing boundary conditions or placing a perfectly matched layer around $[0,1)^d$. The wave speed $c(x)$ is given explicitly and independent of the solution. Our objective is to develop a stable parallel-in-time algorithm for \eqref{eq:waveequation}.}

The wave equation is a physical model for seismic wave and electromagnetic wave in certain simplified setups. It is also used as a test case for developing algorithms that are further generalized to more complicated elastic and electromagnetic wave equations.

Time domain decomposition methods for evolution problems has been of increasing interest in the partial differential equation community due to the increasing number of cores available in modern supercomputers. Despite rapid advance in parallel computer architecture, parallelizing the time evolution of the second order wave equation efficiently is still a challenging problem. 
One of the time domain decomposition paradigms is parallel-in-time method. The whole time domain $[0,T)$ is partitioned into subintervals for parallel processing. 
The most relevant algorithm to this paper is 
the parareal method introduced by Lions, Maday and Turinici \cite{parareal-LMT01}. 
The parareal method combines iteratively two propagators, denoted by $\mathcal{F}v$ the fine propagator and by $\mathcal{C}v$ the coarse propagator. They approximate the solution \reviewerone{$v(t_{n+1})$ propagated from $v(t_n)$}. The \reviewerone{approximate solution at parareal iteration $k$, denoted as $v^k_n$,} can be described by
\begin{equation} \label{eq:plainparareal}
\begin{aligned}
    v^{k+1}_{n+1}&=\mathcal{C} v^{k+1}_{n} +\mathcal{F} v^{k}_{n} - \mathcal{C} v^{k}_{n},\\
    v^{1}_0 &= v_0,~~~v^1_{n+1}=\mathcal{C}v^1_n, ~~~k=1,2,\dots,~~n=0,1,\dots,N.
\end{aligned}
\end{equation}
Note that for each $k$, $\mathcal{F}v_n^k$ is computed in parallel. For the second order wave equation under consideration, $v(x,t)$ is a vector corresponding to $[u(x,t),u_t(x,t)]$.

Typically, the coarse propagator runs on coarse grid and is cheaper to compute, while the fine propagator runs on finer grid and is assumed to fully resolve the small scales in the problem. The finer propagator is thus more costly to compute.

In \cite{Bal2005}, it is shown that the parareal method is stable and converges linearly to the serial fine solution if the coarse propagator is smooth and has sufficient dissipation. When certain conditions are met, the parareal method can achieve high fidelity solution within few iterations. Some applications of the parareal method are: plasma turbulence in Tokamak reactor \cite{samaddar2010parallelization,samaddar2019application,samaddar2017temporal}, Navier-Stoke equations \cite{fischer2005parareal,trindade2004parallel,croce2014parallel}, acoustic wave \cite{mercerat2009application}, shallow water \cite{HautOsci13}, chemical kinetic \cite{blouza2011parallel}, molecular dynamics \cite{bylaska2013extending}, reaction wave  \cite{duarte2011parareal}, neutron diffusion \cite{baudron2014parareal,maday2015towards}, lattice Boltzmann equation for laminar flow \cite{randles2014parallel,kreienbuehl2015numerical,randles2014spatio}. 

Speed up of wall-clock time is attained when the coarse propagator can be chosen as a spatial coarsening of the fine propagator  \cite{ruprecht2012explicit,ruprecht2014spatialcoarse} which allows larger coarse time step. Indeed, this coarsening technique provides additional speed up in some applications \cite{lunet2018Turbulence,arteaga2015stencil,kreienbuehl2015numerical} because the coarse propagator has less grid points to compute, provided an appropriate grid restriction and interpolation operator. However as shown in \cite{ruprecht2014spatialcoarse}, considerable coarse grid resolution and accurate interpolation are required in order to make the parareal iteration \eqref{eq:plainparareal} \reviewerone{converge}.

The parareal method tends to suffer from slow convergence or instability when applied to hyperbolic problems. Using an oscillatory dynamical system as an example,  \cite{AKT-2016parareal,ANT-thetaparareal,mikioInfluence18} pointed out that the phase error between the coarse and fine propagators is the reason for the slow convergence. Analogously for advection problems, the authors in \cite{RuprechtWaveChar} observed that numerical dispersion between the solvers \reviewertwo{makes} the parareal method \reviewertwo{converge} from above and hence causes instability. Intuitively, constructive or destructive interference of two overlapping plane waves depends on their relative phase which is sensitive to the frequency, yet the parareal iterative coupling \eqref{eq:plainparareal} is point-wise in space and time.
\newline
\newline
There have been some attempts to modify the classical parareal method in order to address the slow convergence issue. In \cite{dai2013stable}, the fact that solutions to the wave equation live on a submanifold of constant energy is exploited. In that work, the solutions  are projected onto the submanifold to stabilize the parareal iterations. More precisely the algorithm can be presented as 
$$
    u^{k}_n = P[\mathcal{C}u^{k}_{n-1} + (\mathcal{F} u^{k-1}_{n-1} - (\mathcal{C} u^{k-1}_{n-1}))],
$$
where $P$ denotes the projection onto the constant energy submanifold. 
However, the projection is obtained by solving nonlinear equations which can be sensitive to \HN{the} initial guess. 
\newline
\newline
\HN{In the so-called Krylov-subspace enhanced parareal methods \cite{GanderPetcu2008,ruprecht2012explicit}, computed solution data is used to construct projection operators, which is used to modify the coarse propagator. To get the projection operator $P^k$, a set of orthogonal vectors is constructed for the subspace spanned by $(\{u^j_i \}$ for $i=1,2\dots n, j=1,2\dots k-1)$. Let $S^k$ be the matrix whose columns are the orthogonal vectors $s_\ell$, then $P^k=S^k (S^k)^T$. The enhanced parareal algorithm takes the following form:
$$
    u^{k}_n = (\mathcal{C} (I-P^k) u^{k}_{n-1}+\mathcal{F} P^k u^{k}_{n-1}) + \mathcal{F} u^{k-1}_{n-1} - (\mathcal{C} (I-P^k) u^{k-1}_{n-1}+\mathcal{F}P^k u^{k-1}_{n-1}),
$$
where $I$ is the identity. The enhanced coarse propagator corresponds to
$$
    \mathcal{C} (I-P^k)+\mathcal{F} P^k.
$$}
\HN{The fine propagation, $\mathcal{F} s_\ell$ for $s_\ell$ is the orthogonal vector that defines $P^k$, is precomputed and stored. The precomputation incurs an additional computing cost on top of orthogonalization of the data matrices.}

%

\HN{The reduced basis parareal method \cite{Hesthaven2014reduced} develops more efficient ways to construct the basis vectors and extends the approach to solve nonlinear equations.} 

\reviewerone{The convergence and stability of these methods are analyzed and demonstrated by numerical examples of constant wave speed media in one and two dimension. However, in these work, the fine and coarse solvers are assumed to work on the same spatial grids and examples of variable wave speed are not presented. In this paper, we will consider the solvers on different spatial grids and present examples with variable wave speed. We also use the computed data, but its usage is very different from \cite{GanderPetcu2008,ruprecht2012explicit,Hesthaven2014reduced}, see Section \ref{sec:minimizegap}.
}
\newline
\newline
On the other hand, it is known that the slow convergence and instability of the parareal method for hyperbolic problems can be due to some notions of phase errors \cite{ANT-thetaparareal,AKT-2016parareal} and numerical dispersion \cite{RuprechtWaveChar}. 
In \cite{AKT-2016parareal}, effective multiscale parareal schemes relying on elaborate phase correction are proposed for a class of highly oscillatory dynamical systems. 
In \cite{ANT-thetaparareal}, we derived convergence theory for a modified parareal scheme applying to linear systems of ordinary differential equations (ODEs). 
Additionally, in that work, we investigated a few simple strategies of phase correction systematically and showed that appropriate phase correction could enable the resulting scheme to have superior performance. 

In this paper, we propose a new method, based on the idea of $\theta$-parareal scheme \cite{ANT-thetaparareal}. Instead of decomposing the input data as in \cite{GanderPetcu2008,ruprecht2012explicit}, we use the computed data to 
build an operator, formally denoted as $\theta$, that directly brings the coarse solutions, $\mathcal{C}u$, closer to the fine solutions, $\mathcal{F}u$.
In this paper, the $\theta$ operators are constructed by minimizing the residual between the fine and coarse solutions in a semi-norm related to the discrete wave energy. 

\section{Preliminary background}
We briefly review the plain parareal method and its properties. In a context of linear evolutionary problem $\dot{u}(t)=Au(t)$ for $t\in\{0,\Delta t,...N\Delta t = T\}$ and $A:\mathbb{R}\mapsto\mathbb{R}$ linear function, let us denote the fine propagator/solver $\mathcal{F}u_{n} \mapsto u_{n+1}$ and the coarse propagator/solver $\mathcal{C}u_{n}\mapsto u_{n+1}$. Then the plain parareal iteration $k+1$ can be written as a recurrence relation
\begin{equation}\label{eq:plainparareal2}
    u^{k+1}_{n+1}=\mathcal{C} u^{k+1}_{n} +\mathcal{F} u^{k}_{n} - \mathcal{C} u^{k}_{n}.
\end{equation}
Starting solution $k=1$ is the serial coarse solution $u^{k=1}_{n}=\mathcal{C}^{n} u_0$. In addition, by rewriting the recurrence relation \eqref{eq:plainparareal2} in a matrix form and manipulating the inverse of Toeplitz structure, an error estimate \reviewertwo{$e^{k}_n := |u^k_n-u(t_n)|$} is derived in \cite{ANT-thetaparareal}
\begin{equation}
    e^{k+1}_n\leq \| \mathcal{F}- \mathcal{C} \|_{\infty} {\sum}^{n-k-1}_{i=1} \| \mathcal{C}\|^i_{\infty} e^{k}_n.
\end{equation}

The first term on the right hand side is equivalent to the local truncation error of the coarse propagator, assuming the fine solver is an exact one. The summation term is bounded above by $N$ for stable schemes, e.g. $\|\mathcal{C}\|_{\infty} \leq 1$. Above error estimate is equivalent to linear convergence analysis of the parareal method derived in \cite{Bal2005,gander2007analysis}. 

Wall-clock complexity of the parareal algorithm is estimated by
\begin{equation} \label{eq:plaincomplexity}
    C_{parareal} = K \Big(\dfrac{T}{\Delta t}+\dfrac{T}{n_{CPU}\delta t} \Big).
\end{equation}
Comparing to the complexity of the serial fine solver $C_{fine}=T/\delta t$, the parareal algorithm is more effective (from the perspective of total wall-clock computing time) if (i) a large number of computing cores, $n_{CPU}$, are used; (ii) the coarse/fine time stepping ratio is sufficiently large $\Delta t/\delta t \gg 1$; and (iii)  the number of iterations, needed to for the desired accuracy $K$, is small. 

The key objective of this paper is to introduce a data-driven strategy to stabilize and improve the efficiency of the parareal iteration. 

\section{The proposed method}
We propose a scheme that takes the general form:
\begin{equation}
    \mathbf{u}^{k+1}_{n+1}=\theta^k_{n+1}[\tilde{\mathcal{C}} \mathbf{u}^{k+1}_{n}] +\tilde{\mathcal{F}} \mathbf{u}^{k}_{n} - \theta^k_{n+1}[\tilde{\mathcal{C}} \mathbf{u}^{k}_{n}].
\end{equation}
Here, $\mathbf{u}^{k}_{n}$ denotes the solutions computed on the grid, and it has two component blocks, one corresponds to the wave solution $u$ and the other the time derivative $u_t$. In this paper, for readability we shall also use $\dot u$ to denote the time derivative of $u$, i.e. $u_t \equiv \dot{u}$. The coarse and fine propagators, $\mathcal{C}$ and $\mathcal{F}$ will operate on different grids, and additional interpolation and restriction operators are needed for coupling the two propagators. Here we use $\tilde{\mathcal{C}}$ and $\tilde{\mathcal{F}}$ to denote the appropriately defined operations to be described in detail in this section.

A family of operators $\theta^k_{n}[\cdot]$ are constructed such that 
$$\theta^k_{n+1} \approx \tilde{\mathcal{F}}\tilde{\mathcal{C}}^{-1}: \tilde{\mathcal{C}}\mathbf{u} \mapsto \tilde{\mathcal{F}}\mathbf{u}.$$

Clearly, direct calculation of $\tilde{\mathcal{F}}\tilde{\mathcal{C}}^{-1}$ is not practical because it undermines time parallelization of the $\theta$-parareal method. Instead, we seek an effective mapping that has similar property as $\tilde{\mathcal{F}}\tilde{\mathcal{C}}^{-1}$ and is constructed from data computed along the parareal iterations. 
\newline

\subsection{Discretizations and data preparation}
In this paper, we use the uniform Cartesian grids for the spatial domain and uniform stepping in time.
Both the coarse and the fine propagators are defined by 
the standard second order central difference scheme for the spatial derivatives and velocity Verlet for time marching.
The coarse propagator will operate on the coarse grid: $\Delta x\cdot\mathbb{Z}^d\times \Delta t\cdot\mathbb{Z}^+,$ and the fine propagator will operate on the fine grid: $\delta x\cdot\mathbb{Z}^d\times \delta t\cdot \mathbb{Z}^+,$ for $d=1$ or $2$.

Let $u_n \in \mathbb{R}^{N_{\delta x}},U_n \in \mathbb{R}^{N_{\Delta x}}$ denote respectively the solutions computed at time $t_n=n\Delta t_{com},~n=1,2,3,\dots N$ on the fine and coarse grids. \reviewertwo{$N_{\delta x},N_{\Delta x}$ are the number of grid points for the fine grids and the coarse grids respectively.} 

These fine \reviewertwo{grid functions $u\in \mathbb{R}^{N_{\delta x}}$} and coarse grid functions \reviewertwo{$U \in \mathbb{R}^{N_{\Delta x}}$} are coupled by an interpolation $\mathcal{I}:U\mapsto u$ and a restriction $\mathcal{R}:u\mapsto U$. \reviewertwo{The accuracy of the interpolation method will influence the stability of parareal iteration, as discussed in Section \ref{sec:interpinfluence}. Coarse propagator uses point-wise value of the wave speed $c(j\Delta x)$ and does not involve averaging of the wave speed nor homogenization of the wave equation.}
The fine and coarse propagators communicate at $n\Delta t_{com}.$ 
The fine propagator uses the step size $\delta t=\Delta t_{com} /m_{\mathcal{F}}$ and the coarse propagator uses $\Delta t=\Delta t_{com} /m_{\mathcal{C}}$, with $m_{\mathcal{F}}, m_{\mathcal{C}} \in \mathbb{N}$ selected according to $\delta x$ and $\Delta x$ for stability in the respective time stepping. See Figure~\ref{fig:pararealdiscrete}. 
 
\begin{figure}
    \centering
    \includegraphics[width=0.4\linewidth]{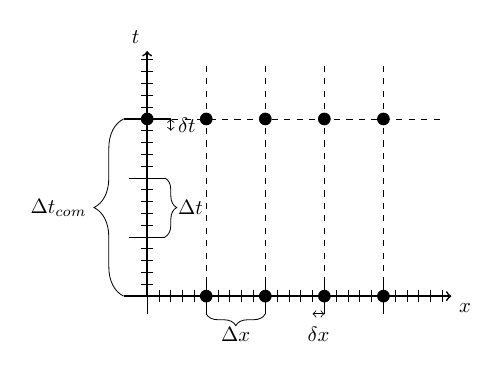}
    \caption{Discretization diagram. Coarse propagator, $\mathcal{C}$ uses spatial grid size $\Delta x$ and temporal step size $\Delta t$. Fine propagator, $\mathcal{F}$ uses spatial grid size $\delta x$ and temporal step size $\delta t$. These propagators communicate at $(j \Delta x, n\Delta t_{com})$ for $j\in\mathbb{Z}, n=1,2,3,\dots,N$.}
    \label{fig:pararealdiscrete}
\end{figure}

Given $[u^{k}_{n-1},\dot{u}^{k}_{n-1}]$ at $t_{n-1}$, the fine and coarse propagators are applied to obtain the solutions to define  
$$[u_n,\dot{u}_n]:=\mathcal{F}[u^{k}_{n-1},\dot{u}^{k}_{n-1}]$$ and  $$[U_n, \dot{U}_n]:=\mathcal{C}[\mathcal{R}u^{k}_{n-1},\mathcal{R}\dot{u}^{k}_{n-1}].$$  
\reviewertwo{For readability, we will write in-line vector $[v, w]$ and full vector
$$\left[\begin{array}{c}
v\\
w
\end{array}\right]$$ interchangeably}. 
These solutions are propagated over a coupling time interval $[t_{n-1}, t_{n})$. These propagators are expected to approximately preserve the wave energy.

Finally, we will quickly describe the data matrices that will be used to construct the operators $\theta^k_n$.
We are interested in using the computed solution data, particularly the gradient of the wavefield $u$ and a weighted momentum of $\dot{u}$. Each column of data matrices is formed by block(s) of the gradients $\nabla U_n$ followed by a block of momentum $\dot{U}_n$ of coarse grid solution at $n$-th coupling time. 
In practice, the gradient operator, $\nabla$, will be replaced by 
some numerical approximation $\nabla_h$.
Then define the data:
\begin{equation} \label{eq:matrixF}
\mathsf{F}:=\left[\begin{array}{cccc}
\nabla_h \mathcal{R} u_{1} & \nabla_h \mathcal{R}u_{2} & \cdots & \nabla_h \mathcal{R}u_{N}\\
c^{-1}\mathcal{R}\dot{u}_{1} & c^{-1}\mathcal{R}\dot{u}_{2} & \cdots & c^{-1}\mathcal{R}\dot{u}_{N}
\end{array}\right],
\end{equation}
\begin{equation} \label{eq:matrixG}
\mathsf{G}:=\left[\begin{array}{cccc}
\nabla_h U_{1} & \nabla_h U_{2} & \cdots & \nabla_h U_{N}\\
c^{-1}\dot{U}_{1} & c^{-1}\dot{U}_{2} & \cdots & c^{-1}\dot{U}_{N}
\end{array}\right].
\end{equation}
Here  and for the rest of the paper, $c^{-1} \dot U_n$ denotes the component-by-component multiplications of  $c^{-1}(x_j)$ and $\dot U_{n} (x_j)$. 
The same convention is used for $c^{-1} \mathcal{R}\dot u_j.$

Now, define the discrete wave energy function as
\begin{equation}\label{eq:discrete-wave-energy}
    E([U_n,\dot{U}_n]) :=\dfrac{1}{2} \sum^{\reviewertwo{N_\Delta x}}_j |\nabla_h U_n(x_j)|^2 \Delta x^d +\dfrac{1}{2} \sum^{\reviewertwo{N_\Delta x}}_j c_j^{-2} |\dot{U}_n (x_j)|^2 \Delta x^d.
\end{equation}
We see that it is equivalent, up to a constant, to the Frobenius norm of the $\mathsf{G}$: 
\begin{equation} \label{eq:partOPP}
    \| \mathsf{G} \|^{2}_{F} = \sum^{N}_{n=1} \Big[ \sum^{\reviewertwo{N_\Delta x}}_j |\nabla_h U_n(x_j)|^2 + \sum^{\reviewertwo{N_\Delta x}}_j c_j^{-2} |\dot{U}_n (x_j)|^2 \Big] \\
    = \dfrac{2}{\Delta x^d} \sum^{N}_{n=1} E([U_n,\dot{U}_n]).
\end{equation}

\subsection{Minimization of coarse-fine solution gaps} \label{sec:minimizegap}
\reviewertwo{For simple plane waves, it is well known that the phase error, not the amplitude difference, between coarse and fine solutions, causes the parareal iteration to converge slowly or diverge \cite{ruprecht2012explicit,mikioInfluence18}. If two plane waves are in phase, parareal style updates can effectively correct the amplitude error. For general wave solutions, it is inconvenient to work with the phase notion defined by the plane wave solutions. Instead, we consider the discrete wave energy semi-norm \eqref{eq:discrete-wave-energy} which is induced by the $\ell^2$ inner-product of the energy component vectors, i.e. the columns of $\mathsf{F},\mathsf{G}$ in \eqref{eq:matrixF} and \eqref{eq:matrixG}. Such inner-product gives us a notion of angle between two wave solutions. The proposed strategy to stabilize the parareal iteration is by minimizing the inner-product between coarse and fine energy component vectors without changing their $\ell^2$ norm. Similar strategies of using wave energy to compare wavefields for wave propagation purposes have been used successfully, for example in seismic imaging \cite{rocha2016acoustic}, wavefield approximation by Gaussian beams \cite{Tanushev-Engquist-Tsai}.}

Denote the $j$-th column of $\mathsf{F}$ and $\mathsf{G}$ by $f_j$ and $g_j$ respectively.
We consider the following optimization problem:
\begin{equation}\label{eq:OPP}
   \min_{\mathsf{\Omega}\in \mathbb{R}^{(d+1)N_{\Delta x} \times (d+1)N_{\Delta x} }} \sum_{j=1}^N ||f_j -\mathsf{\Omega} g_j||_2^2,~~~\text{s.t.}~\mathsf{\Omega}\mathsf{\Omega}^T=\mathsf{\Omega}^T\mathsf{\Omega}=I.
\end{equation}
Recall that the elements in the columns of $\mathsf{F}$ and $\mathsf{G}$ consist of the spatial gradients and weighted time derivatives of the solutions on the respective fine and coarse grid, and that the $\ell^2$ norm corresponds to the discrete wave energy \eqref{eq:discrete-wave-energy}.
Therefore, we look for a unitary matrix so that
the discrete wave energy of the corrected coarse solutions is the same as before correction. 
Intuitively the correction operator aligns the phase (in the above sense) of the coarse solution to fine solution for each $t_n$. It is similar to the local phase-alignment procedure in \cite{AKT-2016parareal} as depicted in Figure \ref{fig:phaseshift}. 
Indeed, from each term in the summation
\begin{equation} \label{eq:fgangle}
    ||f_j-\mathsf{\Omega}g_j||^2_2 = ||f_j||^2_2+||g_j||^2_2-2(f_j,\mathsf{\Omega}g_j),
\end{equation}
the minimization can be interpreted as minimizing the sum of the angles between the columns on the data matrices. Thus, we shall refer to $\mathsf{\Omega}$ as the \emph{phase corrector}.

The minimization problem \eqref{eq:OPP} is equivalent to the "Procrustes Problem" \cite{golub2012matrix}:
\begin{equation} \label{eq:procurstesform}
    \min_{\mathsf{\Omega}\in \mathbb{R}^{(d+1)N_{\Delta x} \times (d+1)N_{\Delta x} }}  \|\mathsf{F} - \mathsf{\Omega} \mathsf{G} \|^{2}_{F},~~~s.t.~~~\mathsf{\Omega}\mathsf{\Omega}^T=\mathsf{\Omega}^T\mathsf{\Omega}=I,
\end{equation}
where $||\cdot||_F$ denotes the Frobenius norm of a matrix. An in-depth review of the Procrustes problem can be found in \cite{gower2004procrustes}. Its variants have been instrumental to multidimensional statistical analysis, rigid body motion simulation, satellite tracking and machine learning \cite{wang2008manifold,ross2008unsupervised,grave2018unsupervised}. 

\begin{figure}
    \centering
    \includegraphics[width=0.4\linewidth]{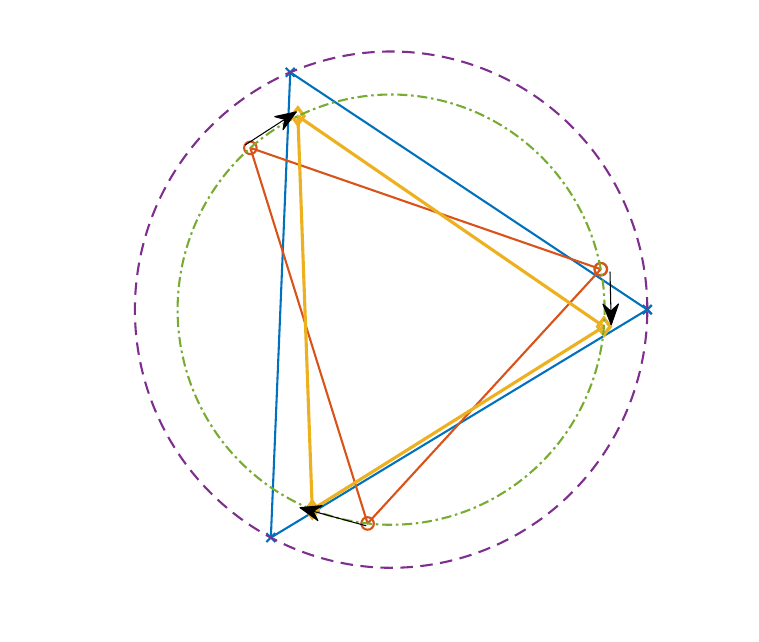}
    \caption{
    Example of Procrustes problem in \eqref{eq:procurstesform}. Blue cross points are reference solution, red circle points represents solution to be aligned blue points. Yellow diamond points are corrected solution of red circle points.}
    \label{fig:phaseshift}
\end{figure}

\subsection{Solution to the optimization problem} \label{sec:solnoptim}
The optimization problem \eqref{eq:OPP} can be solved in a couple of different ways. 
One of them is to use the singular value decomposition (SVD) of the correlation matrix
\begin{equation}\label{eq:correlationmatrixM}
\mathsf{M}:=\mathsf{F}\mathsf{G}^{T}=\sum_{j=1}^{n}\nabla_h \mathcal{R} u_{j}\otimes\nabla_h U_{j}+c^{-1} \mathcal{R} \dot{u}_{j}\otimes c^{-1}\dot{U}_{j}.
\end{equation}

If matrix $\mathsf{M}$ has full rank, the minimizer of \eqref{eq:OPP} is uniquely
\begin{equation}
\mathsf{\Omega}_*=\mathsf{XY}^{T},
\end{equation}
where $\mathsf{X},\mathsf{Y}$ are the left and right singular vectors of $\mathsf{M}=\mathsf{X}\mathsf{\Sigma} \mathsf{Y}^{T}$. 
Correspondingly, the minimum residual is 
\begin{equation*}
    r_{min}^2=\|\mathsf{F}\|^2_F + \|\mathsf{G}\|^2_F - 2~\mathrm{trace}(\mathsf{\Sigma}).
\end{equation*}
Figure \ref{fig:phaseshift} illustrates the Procrustes problem and its solution in a simple setup in $\mathbb{R}^2$.
\newline

\subsubsection{Low rank approximation of $\mathsf{\Omega}_*$} \label{sec:lowrankR}
We now consider a low rank approximation of  $\mathsf{\Omega}_*$ for computational efficiency. 
Since the number of time slices is usually much smaller than the 
number of (coarse) spatial grid nodes, i.e. $N\ll (d+1)N_{\Delta x}$,
we can factorize the data matrices using the reduced QR factorization. Denote the factorizations by
$\mathsf{F}=\mathsf{Q}_F \mathsf{R}_F$ and $\mathsf{G}=\mathsf{Q}_G \mathsf{R}_G$, where  
$$\mathsf{Q}_F,\mathsf{Q}_G \in \mathbb{R}^{(d+1)N_{\Delta x} \times N},~~~~~\mathsf{R}_F,\mathsf{R}_G \in \mathbb{R}^{N \times N}.$$
With the singular value decomposition of the smaller system $\mathsf{R}_F \mathsf{R}_G^T = \mathsf{X}_F \Sigma \mathsf{Y}_G^T$, the correlation matrix can be factored into
$$
    \mathsf{M}=\mathsf{Q}_F \mathsf{X}_F \Sigma \mathsf{Y}_G^T \mathsf{Q}_{G}^{T}.
$$
The last relation shows that
$$\textrm{rank}(\mathsf{M})=\textrm{rank}(\mathsf{R}_F \mathsf{R}_G^T)=\min(\textrm{rank}(\mathsf{F}), \textrm{rank}(\mathsf{G})).$$
Hence we can use the factorization of the smaller $N\times N$ matrix $\mathsf{R}_F \mathsf{R}_G^T$ to obtain
\begin{equation}
    \mathsf{\Omega}_* = (\mathsf{Q}_F \mathsf{X}_F) (\mathsf{Q}_G \mathsf{Y}_G)^T.
\end{equation}
By setting a tolerance to singular values in $\Sigma$, there are $s$ singular values such that $\sigma_s \geq tol$ remained. As the result, we only need to store $s$ number of columns in $\mathsf{Q}_F,\mathsf{Q}_G$, and the truncated phase corrector becomes 

$$ \Omega_{*}:= \Big( \mathsf{Q}_F (:,1:s) \mathsf{X}_F (1:s,1:s) \Big) \Big( \mathsf{Q}_G (:,1:s) \mathsf{Y}_G (1:s,1:s) \Big)^T.$$

\subsubsection{Enriching the phase corrector $\mathsf{\Omega}_*$}
After every parareal iteration, \reviewertwo{more data becomes available}. 
We can use \reviewertwo{this data} to enrich the phase corrector. 
Define
$$
    \mathsf{M}^{k+1} = \mathsf{M}^{k}+\mathsf{F}^k(\mathsf{G}^k)^T.
$$
The singular value decomposition of $M^{k+1}=\Tilde{\mathsf{U}}\Tilde{\mathsf{S}}\Tilde{\mathsf{V}}^T$ can be updated 
using that of $M^k=\mathsf{U}\mathsf{S}\mathsf{V}^T$, see \cite{brand2006fast}. We summarize the update procedure is Algorithm~\ref{updatesvd}.

\begin{algorithm}
$[\Tilde{\mathsf{U}},\Tilde{\mathsf{S}},\Tilde{\mathsf{V}}] \leftarrow \text{UpdateSVD}(\mathsf{U},\mathsf{S},\mathsf{V},\mathsf{F},\mathsf{G},tol): $ \\
    \eIf{$\mathsf{S}$ is empty}{
    $\mathsf{Q}_F \mathsf{R}_F = \texttt{truncatedqr}(\mathsf{F})$\\
    $\mathsf{Q}_G \mathsf{R}_G = \texttt{truncatedqr}(\mathsf{G})$\\
    $\mathsf{X}_{r} \Sigma Y^T_{r} = \texttt{svd}(\mathsf{R}_F \mathsf{R}^T_G)$\\
    $rankM = \texttt{sum}(\texttt{diag}(\Sigma)/\texttt{max}(\texttt{diag}(\Sigma))>tol)$\\
    $\Tilde{U} = \mathsf{Q}_F \mathsf{X}_{r}(:,1:rankM)$\\
    $\Tilde{V} = \mathsf{Q}_G \mathsf{Y}_{r}(:,1:rankM)$\\
    $\Tilde{S} = \Sigma(:,1:rankM)(:,1:rankM)$\\
    }{
    $\mathsf{U}_F = \mathsf{U}^T \mathsf{F} $\\
    $\mathsf{V}_G = \mathsf{V}^T \mathsf{G} $\\
    $\mathsf{Q}_F \mathsf{R}_F = \texttt{truncatedqr}(\mathsf{F}-\mathsf{U}\mathsf{U}_F)$\\
    $\mathsf{Q}_G \mathsf{R}_G = \texttt{truncatedqr}(\mathsf{G}-\mathsf{V}\mathsf{V}_G)$\\
    $\mathsf{H} = [\mathsf{U}_F;\mathsf{R}_F] [\mathsf{V}_G;\mathsf{R}_G]^T + [\mathsf{S} ~~ 0; 0 ~~ 0]$\\
    $X_h \Sigma Y_h = \texttt{svd}(\mathsf{H})$\\
    $rankM = \texttt{sum}(\texttt{diag}(\Sigma)/\texttt{max}(\texttt{diag}(\Sigma))>tol) $\\
    $\Tilde{U} = [\mathsf{U} ~~ \mathsf{Q}_F] \mathsf{X}_{h}(:,1:rankM)$\\
    $\Tilde{V} = [\mathsf{V} ~~ \mathsf{Q}_G] \mathsf{Y}_{h}(:,1:rankM)$\\
    $\Tilde{S} = \Sigma(:,1:rankM)(:,1:rankM)$\\
    }
\caption{Update SVD of the current correlation matrix
$$ \mathsf{M}^{k+1} \equiv \Tilde{\mathsf{U}}\Tilde{\mathsf{S}}\Tilde{\mathsf{V}}^T = \mathsf{U}\mathsf{S}\mathsf{V}^T + \mathsf{F}\mathsf{G}^T $$
where $\mathsf{U}\mathsf{S}\mathsf{V}^T = \mathsf{M}^{k}.$} 
\label{updatesvd}
\end{algorithm}

\subsection{Reconstruction of wavefield from the gradient} \label{sec:funcreconstruct}
After correcting the energy components, i.e. the gradients and the weighted time derivatives, of the coarse solutions, it is necessary to reconstruct the wavefield pair from the corrected energy components. In other words, we denote
$[q,p]$ as the corrected energy components of a wavefield pair $[w,\dot{w}]$
\begin{equation}
\left[\begin{array}{c} q \\
p
\end{array}\right]\equiv
\mathsf{\Omega}_{*} \Lambda \left[\begin{array}{c}
w\\
\dot w
\end{array}\right]
:=
\mathsf{\Omega}_{*} \left[\begin{array}{c}
\nabla_h w\\
c^{-1}\dot w
\end{array}\right],
\end{equation}
where the mapping $\Lambda:[w,\dot{w}]\mapsto[\nabla_h w, c^{-1} \dot{w}]$ takes function to wave energy components. Then we want to deduce the corrected wavefield pair $[v, \dot v]$ such that
$$
    \left[\begin{array}{c} \nabla v\\
c^{-1}\dot{v}
\end{array}\right]
\simeq \left[\begin{array}{c} q\\
p
\end{array}\right].
$$ 
It is straightforward to find the latter component $ \dot{v} = c p$. For the displacement component $v$, we use the spectral property of differentiation 
$\texttt{fft}\{\nabla v\}=i\boldsymbol{\xi}\texttt{fft}\{v\}$
to recover its the Fourier modes as follow 
\begin{equation} \label{eq:gradient2coordinate}
\texttt{fft}\{v\}=\begin{cases}
-i(\boldsymbol{\xi}\cdot\texttt{fft}\{ q \})|\boldsymbol{\xi}|^{-2} & \text{for }|\boldsymbol{\xi}| \neq0,\\
\sum^{N_{\Delta x}}_{j} w(x_j) & \text{for } |\boldsymbol{\xi}|=0.
\end{cases}
\end{equation}
We denote this mapping from energy component to wavefield component as $\Lambda^\dag:[\nabla v, c^{-1}\dot{v}]\mapsto [v,\dot{v}]$. In particular, when the gradient is approximated by Fourier method, this reconstruction is an identity.
\begin{proposition}
    Suppose the gradient of function $v(x)$ is estimated by spectral method $\nabla_h v \equiv \texttt{ifft} \{i\boldsymbol{\xi} \texttt{fft} \{v \} \}$, then 
    \begin{equation}
        \Lambda^\dag \Lambda \left[\begin{array}{c} v\\
        \dot{v}
        \end{array}\right]
        = \Lambda^\dag 
        \left[\begin{array}{c} \texttt{ifft} \{i\boldsymbol{\xi} \texttt{fft} \{v \} \} \\
        c^{-1}\dot{v}
        \end{array}\right]
        =\left[\begin{array}{c} v\\
        \dot{v}
        \end{array}\right].
    \end{equation}
\end{proposition}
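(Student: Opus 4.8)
The plan is to verify the identity block by block, since both $\Lambda$ and $\Lambda^\dag$ act separately on the displacement and velocity components. The velocity block is immediate: $\Lambda$ sends $\dot v \mapsto c^{-1}\dot v$, and by the definition of $\Lambda^\dag$ (namely $\dot v = cp$) the recovered velocity is $c\,(c^{-1}\dot v) = \dot v$. Hence the only substantive work concerns the displacement block, where I must show that reconstructing $v$ from its spectrally computed gradient returns $v$ exactly.

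For the displacement block, the first step is to use that $\texttt{fft}$ and $\texttt{ifft}$ are mutual inverses, so that the gradient component produced by $\Lambda$ satisfies $\texttt{fft}\{\nabla_h v\} = i\boldsymbol{\xi}\,\texttt{fft}\{v\}$. I would then substitute this into the reconstruction formula \eqref{eq:gradient2coordinate}. On the nonzero frequencies $|\boldsymbol{\xi}|\neq 0$, a one-line computation gives $-i\bigl(\boldsymbol{\xi}\cdot i\boldsymbol{\xi}\,\texttt{fft}\{v\}\bigr)|\boldsymbol{\xi}|^{-2} = |\boldsymbol{\xi}|^{2}|\boldsymbol{\xi}|^{-2}\,\texttt{fft}\{v\} = \texttt{fft}\{v\}$, using $\boldsymbol{\xi}\cdot\boldsymbol{\xi} = |\boldsymbol{\xi}|^2$ (in the $d=2$ case this follows componentwise, since each $\partial_\ell v$ transforms to $i\xi_\ell\,\texttt{fft}\{v\}$ and the dot product collects $\sum_\ell \xi_\ell^2$). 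Thus every nonzero Fourier mode of $v$ is recovered exactly.

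The one mode that requires care --- and the real content of the statement --- is the zero frequency $|\boldsymbol{\xi}| = 0$. Differentiation annihilates constants, so the gradient carries no information about the mean of $v$; the reconstruction formula compensates by assigning the DC coefficient the supplementary value $\sum_j v(x_j)$. The step I would single out as the crux is checking that, under the DFT normalization in force, $\sum_j v(x_j)$ is precisely the zero Fourier coefficient of $v$; with the convention $\texttt{fft}\{v\}(\boldsymbol{\xi}) = \sum_j v(x_j)e^{-i\boldsymbol{\xi}\cdot x_j}$ this is immediate, but it is the point at which the exactness of the identity hinges on a bookkeeping convention rather than on the spectral calculus itself.

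Having matched all Fourier modes, the transform of the reconstructed displacement agrees with $\texttt{fft}\{v\}$ on every frequency, so by injectivity of the discrete Fourier transform the reconstructed displacement equals $v$. Combined with the velocity block, this yields $\Lambda^\dag\Lambda\,[v;\dot v] = [v;\dot v]$, as claimed.
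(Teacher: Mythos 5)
Your argument is correct and matches the paper's proof essentially step for step: both verify the velocity block via $c\cdot c^{-1}=1$, recover the nonzero Fourier modes through the cancellation $-i\boldsymbol{\xi}\cdot(i\boldsymbol{\xi})\,|\boldsymbol{\xi}|^{-2}=1$, and handle the zero mode by the supplied mean value $\sum_j v(x_j)$. Your added observation that the DC mode is recovered by convention (it is passed as an extra argument, cf.\ \texttt{grad2func}) rather than by the spectral calculus is accurate and consistent with the paper.
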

\begin{proof}
Let 
\begin{align*}
    \left[\begin{array}{c} w\\
        \dot{w}
        \end{array}\right] &= \Lambda^\dag \Lambda
    \left[\begin{array}{c} v\\
    \dot{v}
    \end{array}\right]
\end{align*}
Since $\Lambda$ maps function to energy components we have
\begin{align*}
    \Lambda^\dag \Lambda \left[\begin{array}{c} v\\
        \dot{v}
        \end{array}\right] &= \Lambda^\dag
    \left[\begin{array}{c} \nabla_h v\\
    c^{-1} \dot{v}
    \end{array}\right]
\end{align*}
By construction of $\Lambda^\dag$, for nonzero wavenumber $|\boldsymbol{\xi}|\neq 0$
\begin{align*}
    \texttt{fft} \{w \} & = -i\boldsymbol{\xi} \cdot \texttt{fft} \{\nabla_h v\} |\boldsymbol{\xi}|^{-2}.
\end{align*}
Here the gradient is approximated using spectral method then
\begin{align} \label{eq:spectraldiff}
    \texttt{fft} \{w \} & = -i\boldsymbol{\xi} \cdot \{ i \boldsymbol{\xi} \texttt{fft} \{v\} \} |\boldsymbol{\xi}|^{-2}
    \\
    & = \texttt{fft}  \{v\}. \nonumber
\end{align}
And for zero wavenumber $|\boldsymbol{\xi} |=0$,
$\texttt{fft} \{w\} = \sum_j v(x_j) = \texttt{fft} \{v\}.$
Thus, $w = v$ while the second energy component $\dot{w} = c c^{-1}\dot{v}=\dot{v}.$ This concludes that the mapping $\Lambda^\dag \Lambda$ is equal to identity.
\end{proof}
If the gradient is approximated by a central finite difference of $2m$-order instead of the spectral method, for one dimensional setting equation \eqref{eq:spectraldiff} in the proof above  becomes 
\begin{align*}
    \texttt{fft} \{w \} & = -i\xi [ i \Delta x^{-1} \sum^m_{j=1} (e^{i j \xi \Delta x} - e^{- i j \xi \Delta x} )\beta_j ~ \texttt{fft} \{v\} ] |\xi|^{-2}
    \\
    & = 2 (\xi \Delta x)^{-1} \sum^m_{j=1} \sin(j \xi \Delta x) \beta_j  \texttt{fft} \{v\},
\end{align*}
where $\beta_j$ are appropriate coefficients of the difference stencil. When the spatial grid is small enough $\xi \Delta x \ll 1,$ above expression is approximately 
\begin{align*}
    \texttt{fft} \{w \} & = 2 (\xi \Delta x)^{-1} \sum^m_{j=1} (j \xi \Delta x - \dfrac{1}{3!}(j \xi \Delta x)^3 + \mathcal{O}(j \xi \Delta x)^5) \beta_j ~ \texttt{fft} \{v\}
    \\
    & = 2 \sum^m_{j=1} (j - \dfrac{1}{3!} j^3 (\xi \Delta x)^2 + \mathcal{O}j^5 (\xi \Delta x)^4) \beta_j ~ \texttt{fft} \{v\}.
\end{align*}
Particularly for the second order central difference $m=1$, we would have $\beta_1 =1/2$, then
\begin{align*}
    \texttt{fft} \{w \} & = \text{sinc}(\xi \Delta x)  \texttt{fft} \{v\},
\end{align*}
which says that $|\texttt{fft} \{w \}| \leq |\texttt{fft} \{v\}|$ because $\text{sinc}(\xi \Delta x)\leq 1$. 

In practice, we observe that the algorithm does not require spectral approximation of the gradient, but instead $\| \Lambda^\dag \Lambda\|_2 \leq 1$ is necessary for stability of the method. When central finite difference is utilized, it is well known that the modified wavenumber is less than $|\boldsymbol{\xi}|$, hence central difference satisfies the requirement $\| \Lambda^\dag \Lambda\|_2 \leq 1$. Algorithm \ref{reconsfuncgrad} summarizes above procedure.

\begin{algorithm}
$ w \leftarrow \text{grad2func} (\nabla_h w, \sum w) $: \\
    $\hat{p}=\texttt{fft}(\nabla_h w)$\\
    
    $\hat{q}(|\boldsymbol{\xi}| \neq 0) = -i\boldsymbol{\xi}\cdot\hat{p}|\boldsymbol{\xi}|^{-2}$\\
    $\hat{q} (|\boldsymbol{\xi}|=0)=\sum w$\\
    $ w = \texttt{ifft}(\hat{q})$
\caption{Reconstruct function from the gradient.} 
\label{reconsfuncgrad}
\end{algorithm}

\subsection{The proposed algorithm}
\begin{algorithm}
\SetAlgoLined
Initialization: 
$[u^{k=1}_{n},\dot{u}^{k=1}_{n}] =\mathcal{I}\mathcal{C}[\mathcal{R}u^{k=1}_{n-1},\mathcal{R}\dot{u}^{k=1}_{n-1}]$\\
$\mathsf{\Sigma} = [~]$, $\mathsf{X} = [~]$, $\mathsf{Y} = [~]$ \\
\While{tolerance not meet and $k\leq K$}
{
    par\For{$n=2\rightarrow N$}{
            $[v_n,\dot{v}_n]=\mathcal{F}[u^{k}_{n-1},\dot{u}^{k}_{n-1}]$\\
            $[U_n,\dot{U}_n]=\mathcal{C}[\mathcal{R}u^{k}_{n-1},\mathcal{R}\dot{u}^{k}_{n-1}]$
        }
        $ $\\
    Solve the orthogonal Procrustes problem:\\
    $\mathsf{F}=[\nabla_h \mathcal{R} v_n,\mathcal{R} c^{-1} \dot{v}_n]$\\
    $\mathsf{G}=[\nabla_h U_n,c^{-1} \dot{U}_n]$\\
    $[\mathsf{X}, \mathsf{\Sigma},\mathsf{Y}]=\texttt{UpdateSVD}(\mathsf{X},\mathsf{\Sigma},\mathsf{Y},\mathsf{F},\mathsf{G},tol) $\\
    $ $\\
    \For{$n=2\rightarrow N$}{
    $[w,\dot{w}]=\mathcal{C} [\mathcal{R}u^{k+1}_{n-1},\mathcal{R}\dot{u}^{k+1}_{n-1}]$\\
    $[q,p]= \mathsf{X}\mathsf{Y}^T[\nabla_h w,c^{-1}\dot{w}] $\\
    $[\Tilde{q},\Tilde{p}]= \mathsf{X}\mathsf{Y}^T[\nabla_h U_n,c^{-1}\dot{U}_n]$\\
    Reconstruct function from gradient: \\
    $q_1 = \texttt{grad2func}(q,\sum w) $ \\
    $\Tilde{q}_1 = \texttt{grad2func}(\Tilde{q},\sum U_n) $ \\
    Update next time step: \\
    $[u^{k+1}_{n},\dot{u}^{k+1}_{n}]=[v_{n},\dot{v}_{n}] + \mathcal{I}\Big([\texttt{real}(q_1 - \Tilde{q}_1),c(p-\Tilde{p}]) \Big)$\\
    }
    $k=k+1$\\
}
\caption{The proposed algorithm.} 
\label{NewPararealAlgo}
\end{algorithm}

The proposed algorithm couples the fine and the coarse propagators at
times $n\Delta t_{com}, n=1,2,\dots,N$ over the fine grid (the spatial grid that the fine solutions are defined). However, it is important to note that \textit{the phase corrections are applied on the coarse grid}. If the two grids are not identical, an interpolation is needed. We denote the interpolation operator by $\mathcal{I}$. 
Furthermore, denote the mappings between the wavefield $[v,\dot v]$ and its energy components $[\nabla v, c^{-1}\dot v]$ by $\Lambda:[v,\dot{v}]\mapsto [\nabla v, c^{-1} \dot{v}]$ and  $\Lambda^\dag:[\nabla v, c^{-1} \dot{v}]\mapsto [v,\dot{v}]$.
With these notations, the $\theta$ operator after $k$ iterations can be written as
\begin{equation*}
    {\theta^{k}}[v,\dot{v}] = \mathcal{I} \Lambda^\dag \mathsf{\Omega}^k_{*} \Lambda [v,\dot{v}].
\end{equation*}
Here  we use $\mathsf{\Omega}^k_{*}$ to denote the
phase corrector derived from the data matrix $\mathsf{M}^{k}$.

Finally, our new algorithm can be written compactly as in $\theta$-parareal form  
\begin{align} \label{eq:thetarelation}
\left[\begin{array}{c}
u_{n+1}^{k+1}\\
\dot{u}_{n+1}^{k+1}
\end{array}\right] = 
\theta^{k} \mathcal{C}
\left[\begin{array}{c}
\mathcal{R}u^{k+1}_{n} \\
\mathcal{R}\dot{u}^{k+1}_{n}
\end{array} \right] +
\mathcal{F}\left[\begin{array}{c}
u^{k}_{n} \\ 
\dot{u}^{k}_{n}\end{array} \right] - 
{\theta^{k}}\mathcal{C} \left[ \begin{array}{c}
\mathcal{R}u^{k}_{n} \\
\mathcal{R}\dot{u}^{k}_{n} \end{array} \right].
\end{align}
\newline
Algorithm \ref{NewPararealAlgo} describes the new scheme in a pseudo-code form with more details.

\reviewerone{Similar to the Krylov subspace method \cite{GanderPetcu2008,Hesthaven2014reduced}, our method requires orthogonalization of data matrices, but they are formed in a different way. In this paper, the data matrices are the multiplication of the wave energy components of the fine data and the coarse data as in equation \eqref{eq:correlationmatrixM}. Then the phase correctors are constructed from the singular value decomposition of the data matrices. 
In \cite{GanderPetcu2008,Hesthaven2014reduced}, the data matrices, consisting of computed solutions, are orthogonalized to form projection operators. In contrast, our phase correctors are not projections, but they effectively induce translation of the coarse solutions on constant energy submanifolds. }

\section{Complexity Analysis}

There are three parts to our implementation: parallel fine propagator computation, construction of $\mathsf{\Omega}^*$ and the serial coarse  updates. We assume that (i) no spatial domain decomposition, i.e. whole domain on a single core, (ii) standard QR complexity, i.e. no multithreading, (iii) communication between nodes and other tasks negligible. 

In each iteration, the wall clock complexity for the parallel fine and coarse computations is in the order of
$$
    \dfrac{1}{n_{CPU}}\Big( \dfrac{T}{\delta t} N_{\delta x} + \dfrac{T}{\Delta t} N_{\Delta x} \Big)(d+1)
$$
where $n_{CPU}$ is the number of cores, $N_{\delta x},N_{\Delta x}$ are respectively the total number of fine and coarse grid points.
The complexity of serial coarse update in an iteration is
$$
    \dfrac{T}{\Delta t} (d+1)N_{\Delta x}.
$$
The complexity of standard QR factorization for constructing $\mathsf{\Omega}$ is
$$
    (d+1)N_{\Delta x}  N^2.
$$
Therefore, the total complexity is
\begin{equation}
    K(d+1)\Big(\dfrac{T}{n_{CPU}\delta t} N_{\delta x} + \dfrac{T}{n_{CPU}\Delta t} N_{\Delta x} +
    \dfrac{T}{\Delta t} N_{\Delta x} + N_{\Delta x} N^2  \Big),
\end{equation}
where $K$ is the number of iterations. In this set up, the speed up over a serial fine computation is 
\begin{equation}
    \Big[ K (\dfrac{1}{n_{CPU}} + (\dfrac{1}{n_{CPU}}+1)\dfrac{\delta t N_{\Delta x}}{\Delta t N_{\delta x}} + \dfrac{N_{\Delta x} N \delta t}{N_{\delta x} \Delta t_{com}})  \Big]^{-1}.
\end{equation}
Additionally, we have coarse/fine time step ratio $\Delta t / \delta t = m_t$, which implies $\Delta t_{com}/\delta t\ge m_t$, and their corresponding the mesh ratio is $\Delta x/ \delta x = m_s$. Hence the theoretical speed up is

\begin{equation}\label{eq:newspeedup}
    \dfrac{1}{K} \min \{\mathcal{O} (n_{CPU}), \mathcal{O} (m_s^d m_t),\mathcal{O} \big(\dfrac{m_s^d m_t}{N} \big)\}.
\end{equation}

\reviewerone{We note that the third term in above speed up is derived from the classical $N^2$ complexity for QR factorization (of matrices of fixed number of rows). The speed up analysis \eqref{eq:newspeedup} presents the worst-case asymptotics as \textit{N approaches infinity}. In practice, we observe that QR factorization has sub-quadratic scaling when multithreading, ubiquitous in modern computers, is enabled. However, to our knowledge, speed up analysis of QR in a multithreading environment is not straightforward. To illustrate the effectiveness of multithreading in computing QR factorization, consider random matrices with fixed $100,000$ number of rows and vary the number of columns in a way relevant to the paper. The computing time is presented in Figure \ref{fig:qrtime}. The computing time roughly grows as the power of $1.5$ of the number of columns, rather than quadratically according to the classical QR complexity. We also see that having 68 threads speed up the computation by more than a factor of 10. 
Finally from numerical experiments, the QR step in our algorithm takes relatively small amount of time compared to other components, see Section \ref{sec:timing}.}

\begin{figure}
    \centering
    \includegraphics[width=0.5\linewidth]{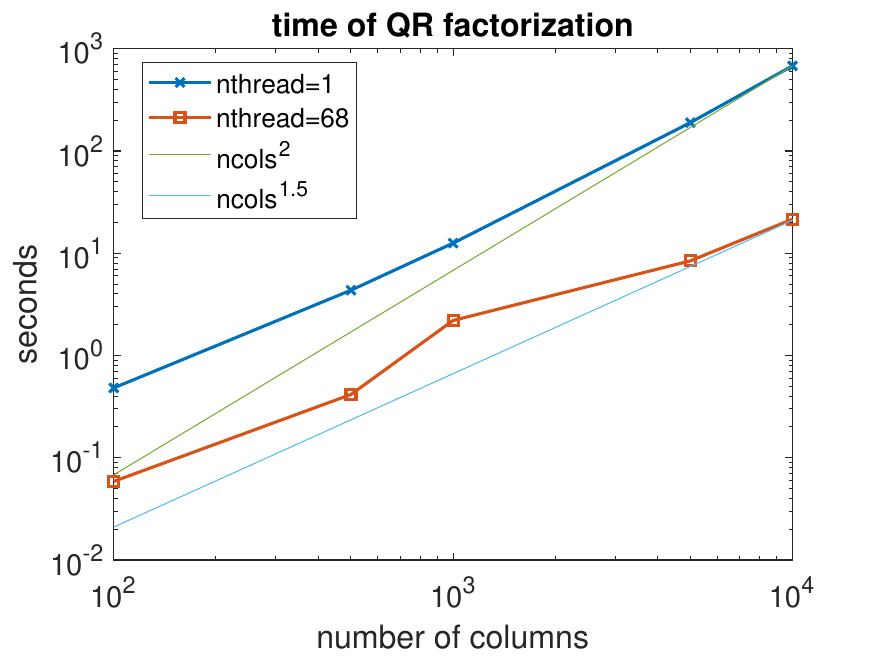}
    \caption{Computing time of QR factorization as function of number of columns.}
    \label{fig:qrtime}
\end{figure}

\section{Stability and convergence}
In this section, we will derive some estimates that show the stability and the convergence of algorithm \ref{NewPararealAlgo} under certain assumptions.
We measure the difference, in the discrete energy semi-norm on the coarse grid, between the serially computed fine solution and the iterated solution.

Consider energy components of parareal iterated solution restricted on the coarse grid
$$
\left[\begin{array}{c} 
    \nabla_h U^k_{n+1} \\
    \dfrac{1}{c} \dot{U}^k_{n+1}
\end{array} \right]
\equiv 
\Lambda \mathcal{R} \left[\begin{array}{c} 
    u^k_{n+1} \\
    \dot{u}^k_{n+1}
\end{array} \right].
$$
Its parareal iterative coupling is expressed as equation \eqref{eq:thetarelation}
\begin{equation}\label{eq:energy-parareal-form}
\left[\begin{array}{c} 
    \nabla_h U^k_{n+1} \\
    \dfrac{1}{c} \dot{U}^k_{n+1}
\end{array} \right]
= 
\Lambda \mathcal{R}
\left(
\theta^{k-1} \mathcal{C}
\left[\begin{array}{c}
\mathcal{R}u^{k}_{n} \\
\mathcal{R}\dot{u}^{k}_{n}
\end{array} \right] +
\mathcal{F}\left[\begin{array}{c}
u^{k-1}_{n} \\ 
\dot{u}^{k-1}_{n}\end{array} \right] - 
{\theta^{k-1}}\mathcal{C} \left[ \begin{array}{c}
\mathcal{R}u^{k-1}_{n} \\
\mathcal{R}\dot{u}^{k-1}_{n} \end{array} \right]
\right).
\end{equation}
Recall that  ${\theta}[v,\dot{v}] := \mathcal{I} \Lambda^\dag \mathsf{\Omega} \Lambda [v,\dot{v}]$, so 
$$
\Lambda \mathcal{R}\theta^{k-1}= \Lambda \mathcal{R} \mathcal{I} \Lambda^\dag \mathsf{\Omega}^k_* \Lambda.
$$
Since the restriction operator takes point wise values, it cancels action of the interpolation $\mathcal{R}\mathcal{I}=1$. So equation \eqref{eq:energy-parareal-form} becomes
\begin{equation} \label{eq:newpararealwavecomponent}
\left[\begin{array}{c} 
    \nabla_h U^k_{n+1} \\
    \dfrac{1}{c} \dot{U}^k_{n+1}
\end{array} \right]
= 
\Lambda \Lambda^\dag \mathsf{\Omega}^k_*\Lambda \mathcal{C}
\left[\begin{array}{c}
U^{k}_{n} \\
\dot{U}^{k}_{n}
\end{array} \right] +
\Lambda \mathcal{R} \mathcal{F}\left[\begin{array}{c}
u^{k-1}_{n} \\ 
\dot{u}^{k-1}_{n}\end{array} \right] - 
\Lambda \Lambda^\dag \mathsf{\Omega}^*_k\Lambda \mathcal{C} \left[ \begin{array}{c}
U^{k-1}_{n} \\
\dot{U}^{k-1}_{n} \end{array} \right].
\end{equation}
Let us denote the square root of wave energy as $\mathcal{E}([U,\dot{U}]) := \sqrt{E([U,\dot{U}])}$, where $E$ is defined in \eqref{eq:discrete-wave-energy}. 
Thus,
$$
    \mathcal{E}([U^{k}_n,\dot{U}^k_n]) = \| \left[\begin{array}{c} 
    \nabla_h U^k_{n} \\
    \dfrac{1}{c} \dot{U}^k_{n}
    \end{array} \right] \|_2.
    $$

\begin{theorem} \label{theorem:energystability}
    Suppose that 
    \begin{enumerate}
        \item the coarse propagator $\mathcal{C}$ satisfies, for some $\epsilon>0$;
        $$ \mathcal{E}(\mathcal{C}[U,\dot{U}]) \leq \mathcal{E}([U,\dot{U}]) + \epsilon;$$ 
        \item the residual of the energy minimization problem is bounded uniformly for $k=1,2,\dots$: 
        $$ \|\mathsf{F}^k-\mathsf{\Omega}^k_*\mathsf{G}^k \|_F \le \eta$$ 
        where $\mathsf{F}^k,\mathsf{G}^k$ are data matrices in \eqref{eq:matrixF},\eqref{eq:matrixG} gathered in the first $k$ iterations;
        \item $\| \Lambda^\dag\Lambda \|_2 \leq 1$, and $\|1-\Lambda^\dag\Lambda \|_2<\lambda \reviewertwo{\ll} 1/N$. 
     \end{enumerate}
     Then 
    \begin{equation}
        \max_{n\leq N}\mathcal{E}([U^{k}_n,\dot{U}^k_n]) \leq \frac{1}{1-\lambda N}\Big( \mathcal{E}([U_0,\dot{U}_0]) + (N+1)\epsilon + C \eta \Big),
    \end{equation}
    where $C$ is a norm equivalence constant between $\ell_{2,1}$ \reviewertwo{norm (sum of $\ell_2$ norm of columns)} and Frobenius \reviewertwo{norm}.
\end{theorem}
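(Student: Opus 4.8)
The plan is to collapse the vector recurrence \eqref{eq:newpararealwavecomponent} to a scalar recurrence for the energy quantities $a^k_n := \mathcal{E}([U^k_n,\dot{U}^k_n]) = \|[\nabla_h U^k_n;\,c^{-1}\dot{U}^k_n]\|_2$, and then to close a discrete Gronwall estimate by induction on the iteration index $k$. First I would group the three terms on the right of \eqref{eq:newpararealwavecomponent} into a \emph{propagated} term $\Lambda\Lambda^\dag\mathsf{\Omega}\Lambda\mathcal{C}[U^k_n,\dot{U}^k_n]$ and a \emph{defect} $\delta^{k-1}_n := \Lambda\mathcal{R}\mathcal{F}[u^{k-1}_n,\dot{u}^{k-1}_n] - \Lambda\Lambda^\dag\mathsf{\Omega}\Lambda\mathcal{C}[U^{k-1}_n,\dot{U}^{k-1}_n]$, where $\mathsf{\Omega}$ is the corrector used at that step. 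The key observation is that the coarse quantity $\Lambda\mathcal{C}[U^{k-1}_n,\dot{U}^{k-1}_n]$ and the fine quantity $\Lambda\mathcal{R}\mathcal{F}[u^{k-1}_n,\dot{u}^{k-1}_n]$ are precisely the $n$-th columns of the data matrices $\mathsf{G}^k$ and $\mathsf{F}^k$ of \eqref{eq:matrixF}--\eqref{eq:matrixG} against which $\mathsf{\Omega}$ was fitted, so the defect is controlled termwise by the Procrustes residual of the second hypothesis.

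Then I would bound the two pieces. For the propagated term, since $\|\mathsf{\Omega}\|_2\le 1$ (orthogonal, or a partial isometry in the truncated case) and $\|\Lambda\Lambda^\dag\|_2\le 1$, the first hypothesis gives $\|\Lambda\Lambda^\dag\mathsf{\Omega}\Lambda\mathcal{C}[U^k_n,\dot{U}^k_n]\|_2 \le \mathcal{E}(\mathcal{C}[U^k_n,\dot{U}^k_n]) \le a^k_n+\epsilon$. For the defect, writing $f_n,g_n$ for the $n$-th columns of $\mathsf{F}^k,\mathsf{G}^k$, I split $\delta^{k-1}_n = (f_n-\mathsf{\Omega}g_n) + (I-\Lambda\Lambda^\dag)\mathsf{\Omega}g_n$; the first summand is a column of the Procrustes residual, while the second obeys $\|(I-\Lambda\Lambda^\dag)\mathsf{\Omega}g_n\|_2 \le \lambda\|g_n\|_2 \le \lambda(a^{k-1}_n+\epsilon)$ by the third and first hypotheses. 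Applying the triangle inequality to \eqref{eq:newpararealwavecomponent} then yields
\[ a^k_{n+1} \le a^k_n + \epsilon + \|f_n-\mathsf{\Omega}g_n\|_2 + \lambda\,(a^{k-1}_n+\epsilon). \]
Telescoping in $n$ from $0$ to $N-1$, bounding $\sum_n\|f_n-\mathsf{\Omega}g_n\|_2 \le C\eta$ by the inequality between the $\ell_{2,1}$ and Frobenius norms ($\sum_n\|\cdot\|_2\le\sqrt{N}\,\|\cdot\|_F$, which supplies $C$) and $\sum_n a^{k-1}_n\le N\max_n a^{k-1}_n$, and absorbing $N\lambda\epsilon$ into $(N+1)\epsilon$ via $N\lambda<1$, I obtain
\[ \max_{n\le N} a^k_n \le \mathcal{E}([U_0,\dot{U}_0]) + (N+1)\epsilon + C\eta + \lambda N\,\max_{n\le N} a^{k-1}_n. \]

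Finally I would close the estimate by induction on $k$. Because $\lambda N<1$ by the third hypothesis, set $R := (1-\lambda N)^{-1}\big(\mathcal{E}([U_0,\dot{U}_0]) + (N+1)\epsilon + C\eta\big)$, which satisfies the fixed-point identity $R = \mathcal{E}([U_0,\dot{U}_0]) + (N+1)\epsilon + C\eta + \lambda N R$. The base case $k=0$ is the serial coarse sweep, for which the first hypothesis gives $a^0_n\le\mathcal{E}([U_0,\dot{U}_0])+n\epsilon\le R$; and if $\max_n a^{k-1}_n\le R$, the displayed inequality returns $\max_n a^k_n\le R$. This is exactly the asserted bound.

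The main obstacle I anticipate is the mismatch between the operator appearing in the recurrence, $\Lambda\Lambda^\dag$, and the operator controlled by the third hypothesis, $\Lambda^\dag\Lambda$: one must justify that $\|\Lambda\Lambda^\dag\|_2$ and $\|I-\Lambda\Lambda^\dag\|_2$ obey the same bounds $\le 1$ and $<\lambda$. This follows because $\Lambda$ and $\Lambda^\dag$ act as Fourier multipliers (the modified-wavenumber symbol and its pseudo-inverse), so $\Lambda\Lambda^\dag$ and $\Lambda^\dag\Lambda$ share the same nonzero symbol---the $\mathrm{sinc}$ factor obtained in the Proposition---and hence the same spectral norm and the same deviation from the identity. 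A secondary point requiring care is the bookkeeping of iteration indices, namely confirming that the fine and coarse data entering $\delta^{k-1}_n$ are exactly the columns against which $\mathsf{\Omega}$ was computed, so that the second hypothesis applies column by column rather than merely in aggregate.
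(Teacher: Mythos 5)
Your proposal follows essentially the same route as the paper's proof: the same triangle-inequality split of \eqref{eq:newpararealwavecomponent} into a propagated term bounded via $\|\mathsf{\Omega}\|_2\le 1$, $\|\Lambda\Lambda^\dag\|_2\le 1$ and the coarse energy bound, the same decomposition of the defect into a Procrustes residual column plus an $(I-\Lambda\Lambda^\dag)$ term, the same telescoping in $n$ with the $\ell_{2,1}$--Frobenius constant, and a closure in $k$ (your fixed-point induction is equivalent to the paper's geometric series and in fact handles the $(\lambda N)^{k-1}\max_j\mathcal{E}([U^1_j,\dot U^1_j])$ term a bit more cleanly). Your explicit justification that $\Lambda\Lambda^\dag$ inherits the bounds hypothesized for $\Lambda^\dag\Lambda$ addresses a point the paper silently elides, and is correct.
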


\begin{proof}
    \reviewertwo{Consider the square root of wave energy of \eqref{eq:newpararealwavecomponent}
    \begin{align*}
    \mathcal{E}([U^{k}_{n+1},\dot{U}^k_{n+1}]) & = 
    \| \left[\begin{array}{c} 
    \nabla_h U^k_{n+1} \\
    \dfrac{1}{c} \dot{U}^k_{n+1}
    \end{array} \right] \|_2
    \\ & = 
    \| \Lambda \Lambda^\dag \mathsf{\Omega}^k_*\Lambda \mathcal{C}
    \left[\begin{array}{c}
    U^{k}_{n} \\
    \dot{U}^{k}_{n}
    \end{array} \right] +
    \Lambda \mathcal{R} \mathcal{F}\left[\begin{array}{c}
    u^{k-1}_{n} \\ 
    \dot{u}^{k-1}_{n}\end{array} \right] - 
    \Lambda \Lambda^\dag \mathsf{\Omega}^*_k\Lambda \mathcal{C} \left[ \begin{array}{c}
    U^{k-1}_{n} \\
    \dot{U}^{k-1}_{n} \end{array} \right] \|_2.
    \end{align*}}
    We apply triangle inequality to obtain
    \begin{align*}
    \mathcal{E}([U^{k}_{n+1},\dot{U}^k_{n+1}]) & \leq \| \Lambda \Lambda^\dag \mathsf{\Omega}\Lambda \mathcal{C}
    \left[\begin{array}{c}
    U^{k}_{n} \\
    \dot{U}^{k}_{n}
    \end{array} \right] \|_2 + 
    \| \Lambda \mathcal{R} \mathcal{F}\left[\begin{array}{c}
    u^{k-1}_{n} \\ 
    \dot{u}^{k-1}_{n}\end{array} \right] - 
    \Lambda \Lambda^\dag \mathsf{\Omega}\Lambda \mathcal{C} \left[ \begin{array}{c}
    U^{k-1}_{n} \\
    \dot{U}^{k-1}_{n} \end{array} \right] \|_2
    \\
    & \leq 
    \| \Lambda \Lambda^\dag \|_2\,\,\| \mathsf{\Omega}\|_2\,\, \|\Lambda \mathcal{C}
    \left[\begin{array}{c}
    U^{k}_{n} \\
    \dot{U}^{k}_{n}
    \end{array} \right] \|_2 + 
    \| \Lambda \mathcal{R} \mathcal{F}\left[\begin{array}{c}
    u^{k-1}_{n} \\ 
    \dot{u}^{k-1}_{n}\end{array} \right] - 
    \Lambda \Lambda^\dag \mathsf{\Omega}\Lambda \mathcal{C} \left[ \begin{array}{c}
    U^{k-1}_{n} \\
    \dot{U}^{k-1}_{n} \end{array} \right] \|_2.
    \end{align*}
    By construction, $\|\mathsf{\Omega}\|_2 = 1$, and by the hypotheses that $\|\Lambda \Lambda^\dag \|_2\leq 1$  and energy bound of the coarse propagator,
    $$\|\Lambda \mathcal{C} [U^k_{n},\dot{U}^k_{n}] \|_2=\mathcal{E}(\mathcal{C}[U^k_{n},\dot{U}^k_{n}])\leq \mathcal{E}([U^k_{n},\dot{U}^k_{n}]) + \epsilon,$$ 
    we have
    \begin{align*}
    \mathcal{E}([U^{k}_{n+1},\dot{U}^k_{n+1}]) & \leq \mathcal{E}([U^k_{n},\dot{U}^k_{n}]) + \epsilon +
    \| \Lambda \mathcal{R} \mathcal{F}\left[\begin{array}{c}
    u^{k-1}_{n} \\ 
    \dot{u}^{k-1}_{n}\end{array} \right] - 
    \Lambda \Lambda^\dag \mathsf{\Omega}^k_*\Lambda \mathcal{C} \left[ \begin{array}{c}
    U^{k-1}_{n} \\
    \dot{U}^{k-1}_{n} \end{array} \right] \|_2
    \\
    & \leq
    \mathcal{E}([U^k_{n},\dot{U}^k_{n}]) + \epsilon +
    \| \Lambda \mathcal{R} \mathcal{F}\left[\begin{array}{c}
    u^{k-1}_{n} \\ 
    \dot{u}^{k-1}_{n}\end{array} \right] - 
    \mathsf{\Omega}^k_*\Lambda \mathcal{C} \left[ \begin{array}{c}
    U^{k-1}_{n} \\
    \dot{U}^{k-1}_{n} \end{array} \right] -
    \Lambda \Lambda^\dag \mathsf{\Omega}^k_*\Lambda \mathcal{C} \left[ \begin{array}{c}
    U^{k-1}_{n} \\
    \dot{U}^{k-1}_{n} \end{array} \right] \|_2.
    \end{align*}
    Seeing the third term as part of the energy minimization problem in \eqref{eq:partOPP}, 
    \begin{align*}
    \mathcal{E}([U^{k}_{n+1},\dot{U}^k_{n+1}]) & \leq \mathcal{E}([U^k_{n},\dot{U}^k_{n}]) + \epsilon + \| f_{n+1} - \mathsf{\Omega}^k_* g_{n+1} \|_2 
    + \|(1-\Lambda \Lambda^\dag)
     \mathsf{\Omega}^k_*\Lambda \mathcal{C} \left[ \begin{array}{c}
    U^{k-1}_{n} \\
    \dot{U}^{k-1}_{n} \end{array} \right] \|_2
    \\
    & \leq
    \mathcal{E}([U^k_{n},\dot{U}^k_{n}]) + \epsilon +
    \| f_{n+1} - \mathsf{\Omega}^k_* g_{n+1} \|_2
    + \|1-\Lambda \Lambda^\dag \|_2 \Big( \mathcal{E}([U^{k-1}_{n},\dot{U}^{k-1}_{n}]) + \epsilon \Big)
    \\
    & \leq
    \mathcal{E}([U^k_{n},\dot{U}^k_{n}]) + \epsilon +
    \| f_{n+1} - \mathsf{\Omega}^k_* g_{n+1} \|_2
    + \lambda \Big( \mathcal{E}([U^{k-1}_{n},\dot{U}^{k-1}_{n}]) + \epsilon \Big)
    \\
    & \leq
    \mathcal{E}([U_{0},\dot{U}_{0}]) + (n+1)\epsilon +
    \sum^n_{j=1}\| f_j - \mathsf{\Omega}^k_* g_j \|_2
    + \sum^{n}_{j=0} \lambda \Big( \mathcal{E}([U^{k-1}_{j},\dot{U}^{k-1}_{j}]) + \epsilon \Big)
    \\
    & \leq
    \mathcal{E}([U_{0},\dot{U}_{0}]) + (n+1)\epsilon +
    C \eta
    + \lambda n \Big( \max_{j\leq N} \mathcal{E}([U^{k-1}_{j},\dot{U}^{k-1}_{j}]) + \epsilon \Big).
    \end{align*}
    As the above relation also holds for $\max_{j\leq N} \mathcal{E}([U^k_j,\dot{U}^k_j])$, therefore, 
    \begin{align*}
        \max_{j\leq N}\mathcal{E}([U^{k}_j,\dot{U}^k_j]) & \leq \mathcal{E}([U_{0},\dot{U}_{0}]) + (N+1)\epsilon +
    C\eta
    + \lambda N \Big( \max_{j\leq N}\mathcal{E}([U^{k-1}_j,\dot{U}^{k-1}_j]) + \epsilon \Big)
    \\ 
    & =
    \lambda N \max_{j\leq N} \mathcal{E}([U^{k-1}_j,\dot{U}^{k-1}_j])  + \Big( \lambda N\epsilon +
    \mathcal{E}([U_{0},\dot{U}_{0}]) + (N+1)\epsilon +
    \reviewertwo{C} \eta \Big).
    \end{align*}
    Applying the discrete Gr{\"o}nwall inequality \reviewertwo{\cite{holte2009discrete}} \reviewertwo{on index $k$}  we get
    \begin{align*}
    \max_{j\leq N}\mathcal{E}([U^{k}_j,\dot{U}^k_j]) & \leq
    (\lambda N)^{k-1} \max_{j\leq N} \mathcal{E}([U^1_j,\dot{U}^1_j])
    \\
    & + \Big( \lambda N\epsilon + \mathcal{E}([U_{0},\dot{U}_{0}]) + (N+1)\epsilon +
    \reviewertwo{C} \eta \Big) \sum^{k-1}_{l=0} (\lambda N)^l.
    \end{align*}
    \reviewertwo{By the assumption $\lambda N \ll 1$,}
    \begin{align*}
        \max_{j\leq N}\mathcal{E}([U^{k}_j,\dot{U}^k_j]) & \leq
    \Big( \lambda N\epsilon + \mathcal{E}([U_{0},\dot{U}_{0}]) + (N+1)\epsilon +
    C\eta \Big) \dfrac{1}{1-\lambda N}.
    \end{align*}
\end{proof}

Next, we will show that, under some hypotheses, the proposed method converges to the solutions computed by applying the fine propagator serially. \reviewertwo{The hypotheses involve Lipschitz smoothness of the phase corrector, which implies the minimization problem \eqref{eq:OPP} is solved with sufficient accuracy.}
We shall use the following notation for those reference solutions:
\begin{equation}
    [u(t_n), \dot{u}(t_n)]:= \mathcal{F}^n [u_0,\dot{u}_0],~~~n=1,2,\dots,N.
\end{equation}
We measure the overall error on the fine grid as the square root of the difference in the discrete wave energy:
\begin{equation}
    \mathcal{E}^k_n 
    := \| \Lambda [u^k_{n} - u(t_{n}), \dot{u}^k_{n} - \dot{u}(t_{n})] \|_2.
\end{equation}

\begin{hypothesis} \label{hypo:lipschitz}
(i) The phase corrected coarse solution is Lipschitz continuous in energy 
\begin{align*}
    \| \Lambda \theta \mathcal{C} \mathcal{R} \left[\begin{array}{c}
    v\\
    \dot{v}
    \end{array} \right]
    -
    \Lambda \theta \mathcal{C} \mathcal{R} \left[\begin{array}{c}
    w\\
    \dot{w}
    \end{array} \right]
    \|_2
    & = 
    \| \Lambda \mathcal{I} \Lambda^\dag \mathsf{\Omega} \Lambda \mathcal{C} \mathcal{R} \left[\begin{array}{c}
    v\\
    \dot{v}
    \end{array} \right] -
    \Lambda \mathcal{I} \Lambda^\dag \mathsf{\Omega} \Lambda \mathcal{C} \mathcal{R} \left[\begin{array}{c}
    w\\
    \dot{w}
    \end{array} \right]
    \|_2
    \\
    & \leq
    (1+\epsilon_{\mathcal{I}\mathcal{R}})(1+\epsilon_{\Lambda^\dag \mathsf{\Omega} \Lambda \mathcal{C}})  \| \Lambda \left[\begin{array}{c}
    v-w\\
    \dot{v} - \dot{w}
    \end{array} \right] \|_2.
\end{align*}
Let $\epsilon_{\theta}$ denote the overall perturbation 
\begin{align} \label{eq:hypo1}
    \| \Lambda \theta \mathcal{C} \mathcal{R} \left[\begin{array}{c}
    v\\
    \dot{v}
    \end{array} \right]
    -
    \Lambda \theta \mathcal{C} \mathcal{R} \left[\begin{array}{c}
    w\\
    \dot{w}
    \end{array} \right]
    \|_2
    \leq
    (1+\epsilon_{\theta})
    \| \Lambda  \left[\begin{array}{c}
    v-w\\
    \dot{v}-\dot{w}
    \end{array} \right]  \|_2.
\end{align}

(ii) The energy error between fine and corrected coarse operators is Lipschitz continuous
\begin{align} \label{eq:hypo2}
    \| (\Lambda \mathcal{F} - \Lambda \theta \mathcal{C} \mathcal{R})
    \left[\begin{array}{c}
    v\\
    \dot{v}
    \end{array} \right]
    - 
    (\Lambda \mathcal{F} - \Lambda \theta \mathcal{C} \mathcal{R})
    \left[\begin{array}{c}
    w\\
    \dot{w}
    \end{array} \right]
    \|_2
    \leq
    \kappa
    \| \Lambda  \left[\begin{array}{c}
    v-w\\
    \dot{v}-\dot{w}
    \end{array} \right]  \|_2.
\end{align}
\end{hypothesis}

\begin{theorem} \label{theorem:errorconvergence}
    Suppose that the fine and corrected coarse operators satisfy Hypothesis~ \eqref{eq:hypo1} and \eqref{eq:hypo2}. 
    Then,
    \begin{equation}
        \max_{j\leq N}\mathcal{E}^k_j  \leq \kappa \dfrac{(1+\epsilon_{\theta})^{N}-1}{\epsilon_{\theta}}
    \max_{j\leq N}~\mathcal{E}^{k-1}_{j}.
    \end{equation}

\end{theorem}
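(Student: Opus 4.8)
The plan is to follow the classical telescoping argument for parareal error recurrences, now carried out in the discrete energy semi-norm. First I would rewrite the serial fine reference solution in the same ``$\theta$-parareal'' form as the iterates. Because $[u(t_{n+1}),\dot u(t_{n+1})]=\mathcal{F}[u(t_n),\dot u(t_n)]$, I can add and subtract the corrected coarse operator to write
\[
\left[\begin{array}{c} u(t_{n+1})\\ \dot u(t_{n+1})\end{array}\right]
= \theta\mathcal{C}\mathcal{R}\left[\begin{array}{c} u(t_n)\\ \dot u(t_n)\end{array}\right]
+ \Big(\mathcal{F}-\theta\mathcal{C}\mathcal{R}\Big)\left[\begin{array}{c} u(t_n)\\ \dot u(t_n)\end{array}\right].
\]
Subtracting this from \eqref{eq:thetarelation} (with $\mathcal{R}$ absorbed into the coarse operator, so that both $\mathcal{F}$ and $\theta\mathcal{C}\mathcal{R}$ map fine-grid wavefields to fine-grid wavefields) gives, writing $\Delta^{k}_n:=[u^{k}_n-u(t_n);\,\dot u^{k}_n-\dot u(t_n)]$ for the wavefield error,
\[
\Delta^{k}_{n+1}
= \theta\mathcal{C}\mathcal{R}\,\Delta^{k}_n
+ \Big(\mathcal{F}-\theta\mathcal{C}\mathcal{R}\Big)\Delta^{k-1}_n .
\]

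Next I would apply the linear map $\Lambda$, which distributes over the two terms, followed by the triangle inequality, and then invoke the two parts of Hypothesis~\ref{hypo:lipschitz}. The first term is controlled by \eqref{eq:hypo1}, contributing a factor $(1+\epsilon_\theta)$ on $\mathcal{E}^{k}_n$, and the fine--coarse defect term by \eqref{eq:hypo2}, contributing a factor $\kappa$ on $\mathcal{E}^{k-1}_n$. This yields the scalar two-index recurrence
\[
\mathcal{E}^{k}_{n+1}\leq (1+\epsilon_\theta)\,\mathcal{E}^{k}_n + \kappa\,\mathcal{E}^{k-1}_n,
\qquad \mathcal{E}^{k}_0 = 0,
\]
where the homogeneous initial condition holds because $[u^{k}_0,\dot u^{k}_0]=[u_0,\dot u_0]$ is exact and independent of $k$.

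Finally I would solve this recurrence. For fixed $k$, bounding $\mathcal{E}^{k-1}_n\leq \max_{j\leq N}\mathcal{E}^{k-1}_j$ uniformly turns the relation into a first-order linear recurrence in $n$ with constant forcing; unrolling from $\mathcal{E}^{k}_0=0$ produces the geometric sum $\sum_{i=0}^{n-1}(1+\epsilon_\theta)^i=\big((1+\epsilon_\theta)^n-1\big)/\epsilon_\theta$. Since this factor is increasing in $n$, taking the maximum over $n\leq N$ gives exactly the claimed contraction factor $\kappa\,\big((1+\epsilon_\theta)^N-1\big)/\epsilon_\theta$.

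I expect the main obstacle to be bookkeeping rather than estimation: one must align the iteration indices so that the current-iteration corrected-coarse term matches \eqref{eq:hypo1} while the previous-iteration fine--coarse defect matches \eqref{eq:hypo2}, and one must keep both $\mathcal{F}$ and $\theta\mathcal{C}\mathcal{R}$ as fine-grid-to-fine-grid maps so that the single outer $\Lambda$ defining $\mathcal{E}^{k}_n$ is applied consistently. I would also flag that the constants $\epsilon_\theta$ and $\kappa$ are assumed uniform in $k$ even though the phase corrector $\mathsf{\Omega}^{k}_*$ changes with each iteration, and that the stated bound tacitly presumes $\epsilon_\theta>0$, the limiting case $\epsilon_\theta\to 0$ recovering the familiar factor $\kappa N$ by continuity.
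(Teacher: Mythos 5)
Your proposal is correct and follows essentially the same route as the paper: you derive the one-step error recurrence $\Delta^{k}_{n+1}=\theta\mathcal{C}\mathcal{R}\,\Delta^{k}_{n}+(\mathcal{F}-\theta\mathcal{C}\mathcal{R})\Delta^{k-1}_{n}$, apply the two Lipschitz hypotheses, and unroll into the geometric sum, whereas the paper first telescopes the operator identity back to $n=0$ and then applies the norms --- the same decomposition and estimates in a different order, yielding the identical factor $\kappa\bigl((1+\epsilon_\theta)^{N}-1\bigr)/\epsilon_\theta$. Your flagged caveats (uniformity of $\epsilon_\theta,\kappa$ in $k$, and the $\epsilon_\theta\to 0$ limit recovering $\kappa N$) are consistent with what the paper implicitly assumes.
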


\begin{proof}
In the following expansion of the parareal iteration, the superscript $k$ in $\theta^k$ are dropped for brevity

\begin{align*}
\left[\begin{array}{c}
u_{n+1}^{k}\\
\dot{u}_{n+1}^{k}
\end{array}\right] & = 
\theta \mathcal{C}\mathcal{R}
\left[\begin{array}{c}
u^{k}_{n} \\
\dot{u}^{k}_{n}
\end{array} \right] +
\mathcal{F}\left[\begin{array}{c}
u^{k-1}_{n} \\ 
\dot{u}^{k-1}_{n}\end{array} \right] - 
{\theta}\mathcal{C}\mathcal{R} \left[ \begin{array}{c}
u^{k-1}_{n} \\
\dot{u}^{k-1}_{n} \end{array} \right] 
\\
& = 
\theta \mathcal{C}\mathcal{R} \Big(
\theta \mathcal{C}\mathcal{R}
\left[\begin{array}{c}
u^{k}_{n-1} \\
\dot{u}^{k}_{n-1}
\end{array} \right] +
\mathcal{F}\left[\begin{array}{c}
u^{k-1}_{n-1} \\ 
\dot{u}^{k-1}_{n-1}\end{array} \right] - 
{\theta}\mathcal{C}\mathcal{R}
\left[ \begin{array}{c}
u^{k-1}_{n-1} \\
\dot{u}^{k-1}_{n-1} \end{array} \right]
\Big)
\\
& +
\mathcal{F}\left[\begin{array}{c}
u^{k-1}_{n} \\ 
\dot{u}^{k-1}_{n}\end{array} \right] - 
{\theta}\mathcal{C}\mathcal{R}
\left[ \begin{array}{c}
u^{k-1}_{n} \\
\dot{u}^{k-1}_{n} \end{array} \right]
\\
& = 
(\theta \mathcal{C}\mathcal{R})^{n+1}\left[\begin{array}{c}
u_{0} \\ 
\dot{u}_{0}\end{array} \right]
+ (\theta \mathcal{C}\mathcal{R})^{n}
\Big( \mathcal{F}\left[\begin{array}{c}
u_{0} \\ 
\dot{u}_{0}\end{array} \right] - 
{\theta}\mathcal{C}\mathcal{R}
\left[ \begin{array}{c}
u_{0} \\
\dot{u}_{0} \end{array} \right] \Big) 
\\ 
& + 
(\theta \mathcal{C}\mathcal{R})^{n-1}
\Big( \mathcal{F}\left[\begin{array}{c}
u^{k-1}_{1} \\ 
\dot{u}^{k-1}_{1}\end{array} \right] - 
{\theta}\mathcal{C}\mathcal{R}
\left[ \begin{array}{c}
u^{k-1}_{1} \\
\dot{u}^{k-1}_{1} \end{array} \right] \Big) \dots
\\
& +
(\theta \mathcal{C}\mathcal{R})
\Big( \mathcal{F}\left[\begin{array}{c}
u^{k-1}_{n-1} \\ 
\dot{u}^{k-1}_{n-1}\end{array} \right] - 
{\theta}\mathcal{C}\mathcal{R}
\left[ \begin{array}{c}
u^{k-1}_{n-1} \\
\dot{u}^{k-1}_{n-1} \end{array} \right] \Big) + 
\Big( \mathcal{F}\left[\begin{array}{c}
u^{k-1}_{n} \\ 
\dot{u}^{k-1}_{n}\end{array} \right] - 
\theta\mathcal{C}\mathcal{R}
\left[ \begin{array}{c}
u^{k-1}_{n} \\
\dot{u}^{k-1}_{n} \end{array} \right] \Big).
\end{align*}
It can be verified that the serial fine solution $[u(t_{n+1}),\dot{u}(t_{n+1})]$ also satisfies above expression when superscript $k,k-1$ are dropped in solution vector $[u^k_{\cdot},\dot{u}^k_{\cdot}]$. Then we have an expression for the difference of the solutions
\begin{align} \label{eq:errordiffexpand}
    \left[\begin{array}{c}
u_{n+1}^{k} - u(t_{n+1})\\
\dot{u}_{n+1}^{k} - \dot{u}(t_{n+1})
\end{array} \right] = & 
(\theta\mathcal{C}\mathcal{R})^{n-1}(\mathcal{F}-\theta\mathcal{C}\mathcal{R}) \left[\begin{array}{c}
u_{1}^{k-1} - u(t_1)\\
\dot{u}_{1}^{k-1} - \dot{u}(t_1)
\end{array} \right] + \dots
\\
&
(\theta\mathcal{C}\mathcal{R})(\mathcal{F}-\theta\mathcal{C}\mathcal{R}) \left[\begin{array}{c}
u_{n-1}^{k-1} - u(t_{n-1})\\
\dot{u}_{n-1}^{k-1} - \dot{u}(t_{n-1})
\end{array} \right] + \nonumber 
\\
& 
(\mathcal{F}-\theta\mathcal{C}\mathcal{R}) \left[\begin{array}{c}
u_{n}^{k-1} - u(t_{n})\\
\dot{u}_{n}^{k-1} - \dot{u}(t_{n})
\end{array} \right]. \nonumber
\end{align}
Recall the square root of energy error is defined as
$$
\mathcal{E}^{k}_{n+1}= 
\| \Lambda \left[\begin{array}{c}
u_{n+1}^{k} - u(t_{n+1})\\
\dot{u}_{n+1}^{k} - \dot{u}(t_{n+1})
\end{array} \right] \|_2.
$$
Using triangle inequality on $\mathcal{E}^k_{n+1}$ with equation (\ref{eq:errordiffexpand}) we obtain 
\begin{align*}
    \mathcal{E}^k_{n+1} & \leq \|\Lambda (\theta \mathcal{C} \mathcal{R})^{N-1} (\mathcal{F} - \theta \mathcal{C} \mathcal{R}) \left[\begin{array}{c}
    u^{k-1}_1 - u(t_1)\\
    \dot{u}^{k-1}_1 - \dot{u}(t_1)
    \end{array} \right] \|_2 \dots
    \\
    & +
    \|\Lambda (\theta \mathcal{C} \mathcal{R}) (\mathcal{F} - \theta \mathcal{C} \mathcal{R}) \left[\begin{array}{c}
    u^{k-1}_{n-1} - u(t_{n-1})\\
    \dot{u}^{k-1}_{n-1} - \dot{u}(t_{n-1})
    \end{array} \right]  \|_2 
    \\
    & + 
    \|\Lambda(\mathcal{F} - \theta \mathcal{C} \mathcal{R}) \left[\begin{array}{c}
    u^{k-1}_{n} - u(t_{n})\\
    \dot{u}^{k-1}_{n} - \dot{u}(t_{n})
    \end{array} \right] \|_2.
\end{align*}
Apply equation (\ref{eq:hypo1}) in Hypothesis~\ref{hypo:lipschitz} (i) to bound each term
\begin{align*}
    \mathcal{E}^k_{n+1} & \leq (1+\epsilon_{\theta})^{n-1}\|\Lambda (\mathcal{F} - \theta \mathcal{C} \mathcal{R}) \left[\begin{array}{c}
    u^{k-1}_1 - u(t_1)\\
    \dot{u}^{k-1}_1 - \dot{u}(t_1)
    \end{array} \right] \|_2 \dots
    \\
    & +
    (1+\epsilon_{\theta})\|\Lambda (\mathcal{F} - \theta \mathcal{C} \mathcal{R}) \left[\begin{array}{c}
    u^{k-1}_{n-1} - u(t_{n-1})\\
    \dot{u}^{k-1}_{n-1} - \dot{u}(t_{n-1})
    \end{array} \right]  \|_2 
    \\
    & + 
    \|\Lambda (\mathcal{F} - \theta \mathcal{C} \mathcal{R}) \left[\begin{array}{c}
    u^{k-1}_{n} - u(t_{n})\\
    \dot{u}^{k-1}_{n} - \dot{u}(t_{n})
    \end{array} \right] \|_2.
\end{align*}
Finally we use equation (\ref{eq:hypo2}) in Hypothesis~\ref{hypo:lipschitz} (ii) to obtain 
\begin{align*}
    \mathcal{E}^k_{n+1} & \leq (1+\epsilon_{\theta})^{N-1} \kappa \|\Lambda\left[\begin{array}{c}
    u^{k-1}_1 - u(t_1)\\
    \dot{u}^{k-1}_1 - \dot{u}(t_1)
    \end{array} \right] \|_2 \dots
    \\
    & +
    (1+\epsilon_{\theta}) \kappa \|\Lambda \left[\begin{array}{c}
    u^{k-1}_{n-1} - u(t_{n-1})\\
    \dot{u}^{k-1}_{n-1} - \dot{u}(t_{n-1})
    \end{array} \right]  \|_2 
    \\
    & + 
    \kappa \|\Lambda \left[\begin{array}{c}
    u^{k-1}_{n} - u(t_{n})\\
    \dot{u}^{k-1}_{n} - \dot{u}(t_{n})
    \end{array} \right] \|_2
    \\
    & =
    (1+\epsilon_{\theta})^{n-1} \kappa \mathcal{E}^{k-1}_{1}  \dots
    +
    (1+\epsilon_{\theta}) \kappa \mathcal{E}^{k-1}_{n-1}  
    + 
    \kappa \mathcal{E}^{k-1}_{n}
    \\
    & \leq
    \kappa \Big( (1+\epsilon_{\theta})^{n-1}  \dots
    +
    (1+\epsilon_{\theta}) +1 \Big)  
    \max_{j\leq n} \mathcal{E}^{k-1}_{j}
    \\
    & = \kappa \dfrac{(1+\epsilon_{\theta})^{n}-1}{\epsilon_{\theta}}
    \max_{j\leq n}~\mathcal{E}^{k-1}_{j}.
\end{align*}
Thus 
\begin{align*}
    \max_{j\leq N}~\mathcal{E}^{k}_{j} \leq \kappa \dfrac{(1+\epsilon_{\theta})^{N}-1}{\epsilon_{\theta}}
    \max_{j\leq N}~\mathcal{E}^{k-1}_{j}.
\end{align*}

By assumption $\kappa N < 1$ and $\epsilon_{\theta} N \ll 1$, the error goes to zero as $k$ approaches infinity.  
\end{proof}

We see that the convergence depends on the Lipschitz constant $\kappa$ in Hypothesis~\ref{hypo:lipschitz} (ii), which reflects the gap between the corrected coarse propagator to the fine propagator. This gap between propagators is quantified by the energy residual of the minimization \eqref{eq:procurstesform}. 

\section{Numerical Study of the New Algorithm}
In this section, we study the influence of different components of the proposed algorithm to the overall stability and accuracy.
From Section \ref{sec:ranktol} to Section \ref{sec:interpinfluence}, we consider the influence of (i) varying the low-rank approximation of the optimal phase correctors $\mathsf{\Omega}_*$, (ii) effect of the phase corrector and the parareal update,
(iii) different orders of approximation for the gradient $\nabla_h$ and interpolation operator $\mathcal{I}. $
Regarding to the last item, we will use the following interpolation methods, written as MATLAB functions, in this section:
\begin{itemize}
\item $\texttt{interpft}$: Fourier interpolation 
\item $\texttt{akima}$: cubic Hermite interpolation
\item $\texttt{pchip}$: cubic interpolation
\item $\texttt{linear}$: linear interpolation
\end{itemize}

From Section \ref{sec:ranktol} to Section \ref{sec:interpinfluence}, we shall consider the simplest one dimensional setting with $c\equiv 1$ for both coarse and fine propagator, and the initial data:
\begin{align*}
u(x,0)  &=\cos(10\pi x)\exp(-100x^2), ~~ x\in[-0.5,0.5]\\
u_t(x,0)&=0.
\end{align*}

For Section \ref{sec:randsubsample}, we consider random subsampling of the data matrices to exploit their observed low rank property. In this study, we consider a two dimensional problem with variable wave speed.
\newline
\newline
We will assume that the coarse grid nodes overlap with the fine grid nodes, and that the restriction operator $\mathcal{R}$ is just a point-wise evaluation on the coarse grid nodes.

The errors at final time $T_N=T$ are defined as square root of energy of difference on the fine grid
$$\sqrt{\frac{E([u^{k}_{N}-u(t_{N}),\dot{u}^{k}_{N}-\dot{u}(t_{N})])}{E([u(t_{N)},\dot{u}(t_{N})])}}.$$ And similarly the error can also be defined in $\ell^2$ of difference in displacement component
$$\frac{\| u^{k}_{N}-u(t_{N}) \|_2}{\|u(t_{N})\|_2}.$$
The reference solution $[u(t_N),\dot{u}(t_N)]$ are serially computed using the fine propagators.

\subsection{Rank tolerance of the phase corrector}\label{sec:ranktol}

In this example, we study the sensitivity of the algorithm to rank-truncation of the optimal phase corrector $\mathsf{\Omega_*}.$
We use the same spatial grid for both the coarse and the fine propagators in order to avoid error coming from interpolation/restriction. 
The fine propagator has an CFL number that is 20 times smaller than the coarse, and the coupling take place every 10 coarse steps. We sample several values for tolerance in Algorithm \ref{updatesvd} at $10^{-15},10^{-12},10^{-9},10^{-6},10^{-3}$. The parameters are tabulated below:
\bigskip
\begin{center}
\begin{tabular}{ c|c|c|c|c|c|c|c|c}
 $T$ & $\Delta t_{com}$ & $\Delta x$ & $\Delta t/\Delta x$ & $\Delta x/\delta x$ &  $\Delta t/\delta t$ & $\mathcal{I}$ & $\nabla_h$ & tol \\ 
 \hline
 $5$ & $0.05$ & $0.01$ & $0.5$ & $1$ & $20$ & $\texttt{interpft}$ & 2 order & \textcolor{blue}{$10^{\{-15,-12,-9,-6,-3\}}$} \\
\end{tabular}
\end{center}\bigskip

\begin{figure}
    \centering
    \includegraphics[width=0.45\linewidth]{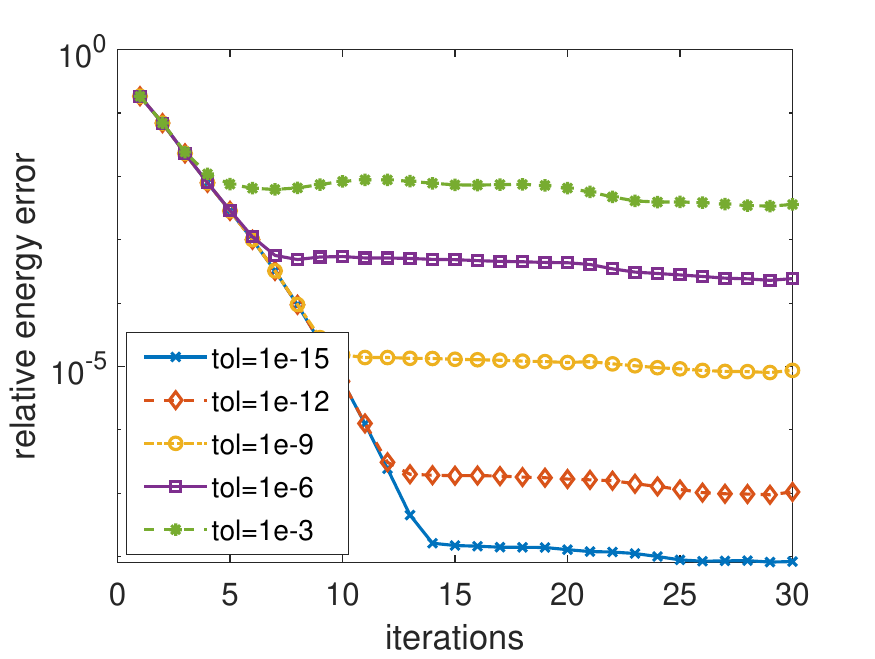}
    \includegraphics[width=0.45\linewidth]{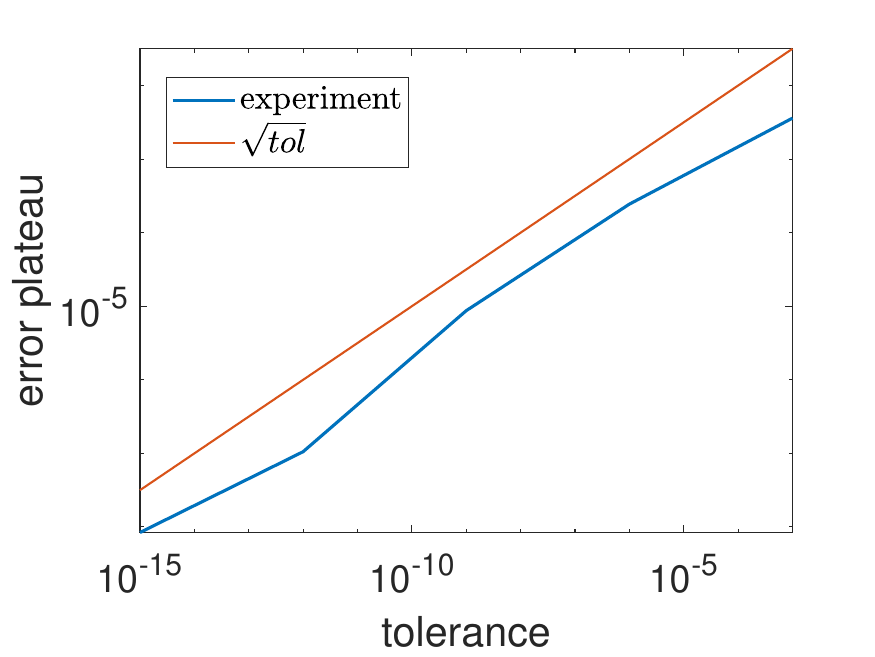}
    \caption{Dependence of error convergence on rank tolerance which is in Algorithm \ref{updatesvd}. Left: relative energy error as a function of iterations. Right: the stagnated error value as a function of tolerance.}
    \label{fig:ranktolerror}
\end{figure}

\begin{figure}
    \centering
    \includegraphics[width=0.5\linewidth]{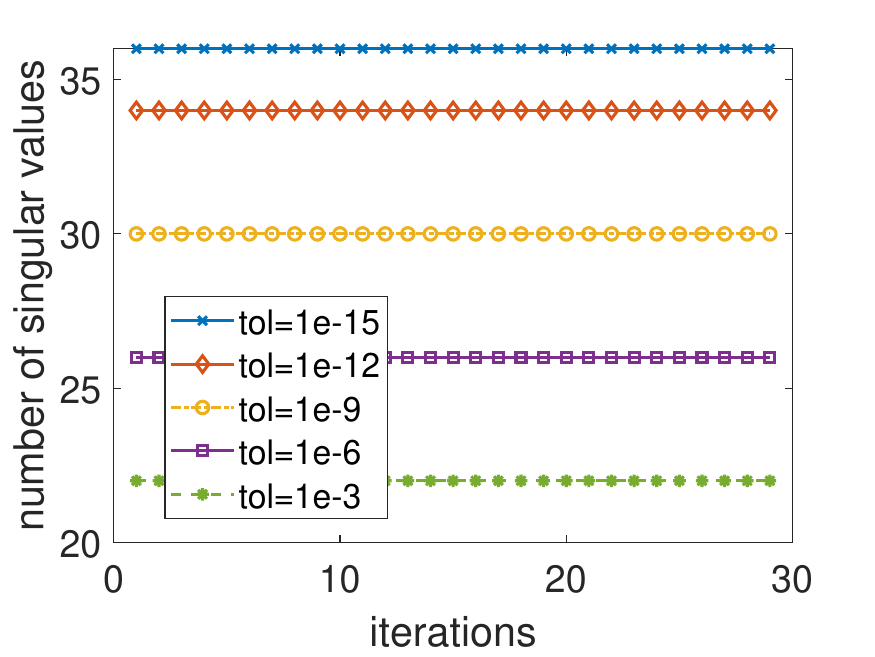}
    \caption{Number of singular values for different value of tolerance.}
    \label{fig:numranktol}
\end{figure}

Figure \ref{fig:ranktolerror} shows the relative \reviewertwo{energy} error along with the iterations as the tolerance in the truncation of $\mathsf{\Omega}_*$ is  varied. The errors decrease in the first few. The rate of decrease seem independent of the chosen tolerance values. As more iterations progress, the errors convergence eventually stagnate at certain values that strongly correlate to the chosen tolerance values. Particularly, the stagnated error values scale as the square root of the tolerance as shown on the right plot of Figure \ref{fig:ranktolerror}. This scaling can be explained by the fact that the tolerance 
corresponds to the truncation of $\mathsf{\Omega}_*$, which modifies the wave energy components, and we measure the square root of wave energy difference. Hence in general, the convergence rate of our method is expected to slow down after the error has passed $ 10^{-8}$ because the tolerance can only be as small as machine epsilon $10^{-16}$.
Figure \ref{fig:numranktol} shows the number of retained singular values for different values of tolerance. 

\subsection{The effect of phase correction (\textcolor{blue}{$\mathsf{\Omega}\equiv 1$})}
\begin{figure}
    \centering
    \includegraphics[width=0.45\linewidth]{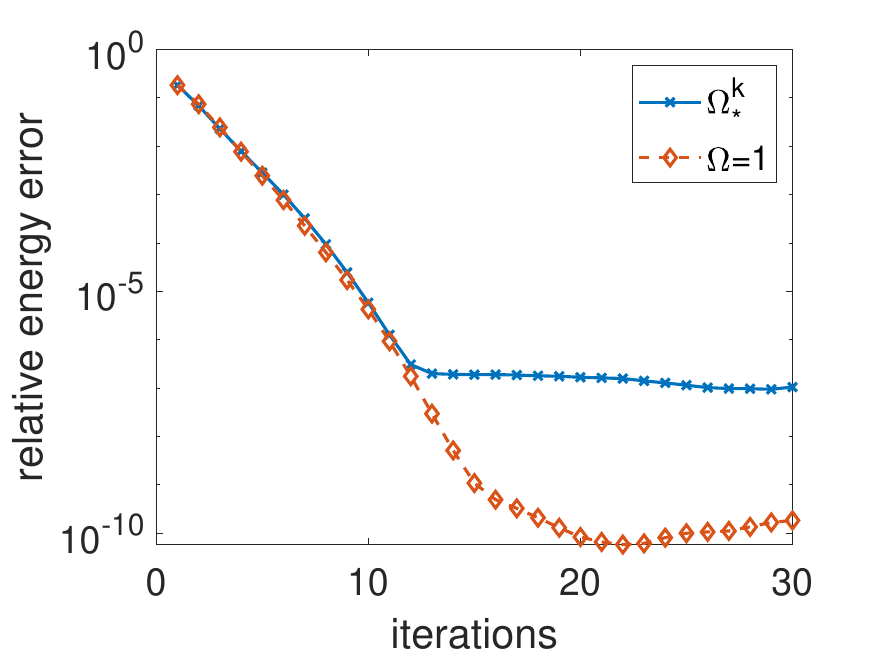}
    \includegraphics[width=0.45\linewidth]{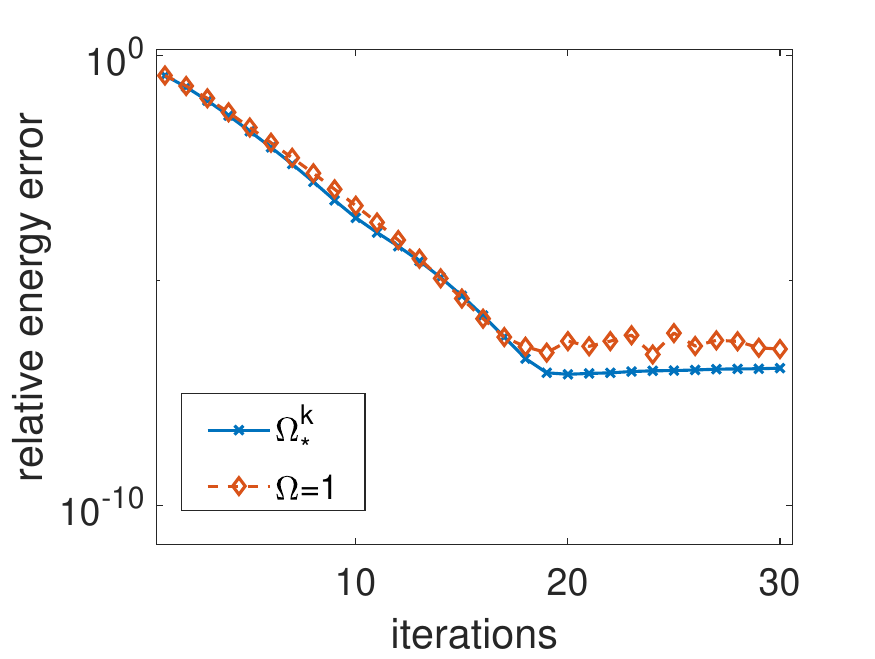}\\
    \includegraphics[width=0.45\linewidth]{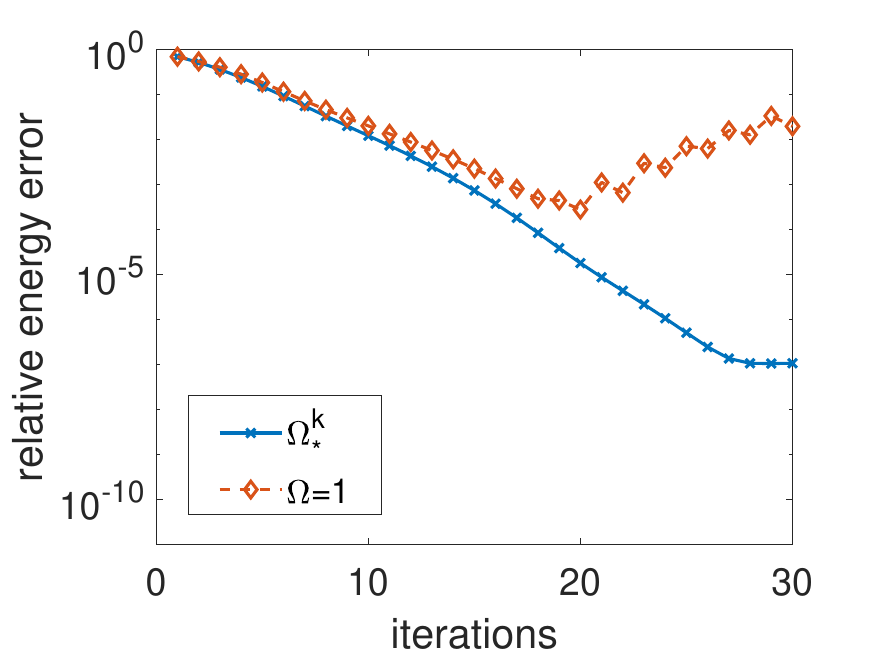}
    \includegraphics[width=0.45\linewidth]{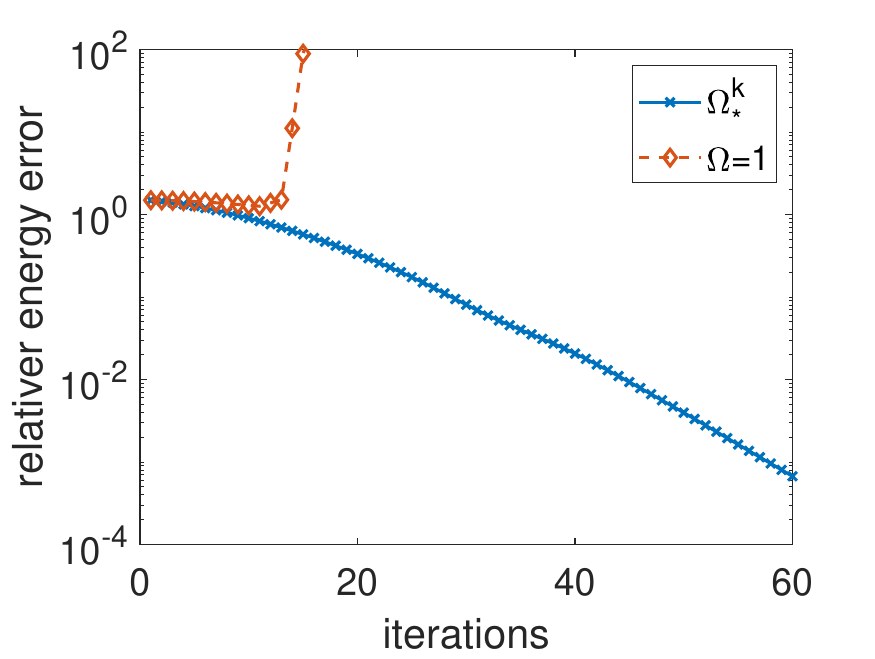}
    \caption{Comparison of error convergence at different terminal time. Top row, left: $T=2.5$, right: $T=5$. Bottom row, left: $T=10$, right: $T=50$. Applying optimized $\mathsf{\Omega}_*$ to solution is shown in cross solid curve. No optimization $\mathsf{\Omega}=1$, but fine and coarse solutions are coupled in wave energy components, is shown in diamond dashes. 
    }
    \label{fig:omega1}
\end{figure}

Assuming again that the coarse and fine propagators are on the same grid. Without the phase correction, i.e.  $\mathsf{\Omega}\equiv 1$, the proposed iteration takes the form 
$$
\left[\begin{array}{c}
u_{n}^{k}\\
\dot{u}_{n}^{k}
\end{array}\right] = 
\Lambda^\dag \Lambda \mathcal{C}
\left[\begin{array}{c}
u^{k}_{n-1} \\
\dot{u}^{k}_{n-1}
\end{array} \right] +
\mathcal{F}\left[\begin{array}{c}
u^{k-1}_{n-1} \\ 
\dot{u}^{k-1}_{n-1}\end{array} \right] - 
\Lambda^\dag \Lambda \mathcal{C} \left[ \begin{array}{c}
u^{k-1}_{n-1} \\
\dot{u}^{k-1}_{n-1} \end{array} \right].
$$
The above expression becomes the plain parareal method if the term $\Lambda^\dag \Lambda = 1$. But when the first wave component $\nabla_h u$ is approximated by some finite \reviewertwo{difference}, the term $\Lambda^\dag \Lambda \neq 1$ in general. In particular when $\nabla_h$ is approximated by the standard second order central \reviewertwo{difference}, 
i.e. $\nabla_h=D_{\Delta x}^0$, $\Lambda^\dag \Lambda$ corresponds to multiplication of 
$$ \dfrac{\sin{\xi \Delta x}}{\xi \Delta x} = \mathrm{sinc}(\xi \Delta x)$$ to the Fourier mode of the solutions. 
Since $|\mathrm{sinc}(\xi \Delta x) |\leq 1$,  $\Lambda^\dag \Lambda$  damps high frequency modes, and thus stabilizes parareal-like iterations. 

Nevertheless, for long time simulations, such high frequency damping may \reviewertwo{be insufficient} to stabilize the parareal-like iterations. To illustrate this, we take the same discretization as above but now consider four terminal times $ T=2.5 ,5 ,10, 50$:
\bigskip
\begin{center}
\begin{tabular}{ |c|c|c|c|c|c|c|c|c|c|}
\hline
 $T$ & $\Delta t_{com}$ & $\Delta x$ & $\Delta t/\Delta x$ & $\Delta x/\delta x$ &  $\Delta t / \delta t$ & $\mathcal{I}$ & $\nabla_h$ & tol\\ 
 \hline
  * & $0.05$ & $0.01$ & $0.5$ & $1$ & $20$ & $\texttt{interpft}$ & 2 order FD & $10^{-14}$ \\
  \hline
\end{tabular}
\end{center}\bigskip
$*:\textcolor{blue}{\{2.5,5,10,50\}}.$
Figure \ref{fig:omega1} presents a comparison of the errors computed with 
$\mathsf{\Omega}\equiv 1$ and with $\mathsf{\Omega}=\mathsf{\Omega}^k_*$, for different terminal times. 
For shorter time intervals, such as $T=2.5$, the two choices of $\mathsf{\Omega}$ yield similar convergence rates until after some iterations when the errors computed with $\mathsf{\Omega}_*$ plateau around a much larger value. For larger terminal times, $T=5,10,50$, the instability that comes with using $\mathsf{\Omega}\equiv 1$ becomes more and more apparent, while
the computations with $\mathsf{\Omega}=\mathsf{\Omega}^k_*$ remain stable.

\subsection{The effect of parareal-like corrections}
\begin{figure}
    \centering
    \includegraphics[width=0.45\linewidth]{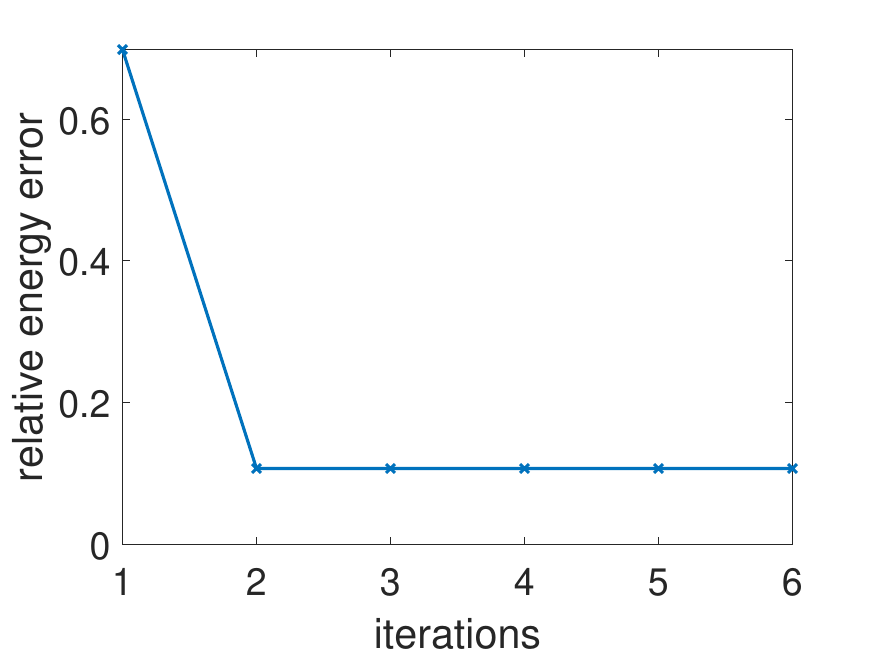} 
    \includegraphics[width=0.45\linewidth]{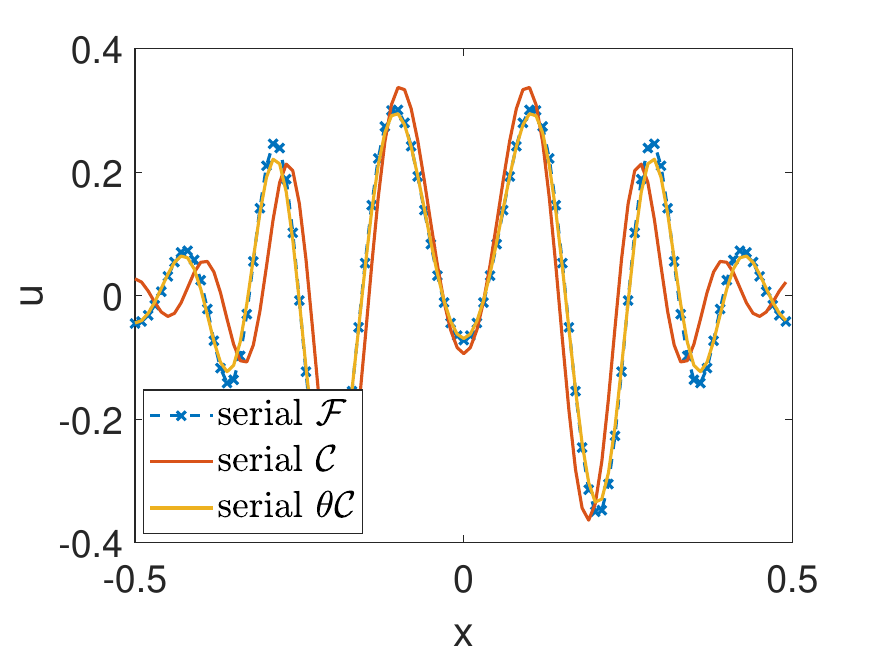} 
    \caption{The solution computed serially by the phase-corrected coarse propagator. Left: relative energy error of the phase-corrected coarse solution. Right: comparison with the serial fine and serial coarse solutions at $T=10$. }
    \label{fig:nopararealupdate}
\end{figure}

If the parareal-style additive correction is omitted, solution is propagated with just the phase corrected coarse propagator: 
\begin{align*}
\left[\begin{array}{c}
u_{n}^{k}\\
\dot{u}_{n}^{k}
\end{array}\right] = 
\theta^{k-1} \mathcal{C}
\left[\begin{array}{c}
\mathcal{R}u^{k}_{n-1} \\
\mathcal{R}\dot{u}^{k}_{n-1}
\end{array} \right].
\end{align*}

The simulation parameters are given as follow
\begin{center}
\begin{tabular}{|c|c|c|c|c|c|c|c|c|c|}
\hline
 $T$ & $\Delta t_{com}$ & $\Delta x$ & $\Delta t/\Delta x$ & $\Delta x/\delta x$ &  $\Delta t / \delta t$ & $\mathcal{I}$ & $\nabla_h$ & tol\\ 
 \hline
 $10$ & $0.05$ & $0.01$ & $0.5$ & $1$ & $20$ & $\texttt{interpft}$ & 4 order & $10^{-14}$ \\
 \hline
\end{tabular}
\end{center}\bigskip

We first point out that if $\mathcal{C}$ preserves the discrete wave energy, then the above scheme will also preserve it by construction of $\theta^k$.
Figure~\ref{fig:nopararealupdate} shows the errors comparing to the serial fine solution. At iteration $k=1$, the solution is serially computed with the coarse propagator $\mathcal{C}$. At iteration $k=1$, a phase corrector  $\theta^2$ is constructed based on the data computed in $k=1$. The solution at $k=2$ is serially computed with $\theta \mathcal{C}$. 
On the right subplot of Figure~\ref{fig:nopararealupdate}, we see that
the coarse solution now has the same phase as the fine solution, but has a slightly different amplitude.
For iteration after $k=3$, however, the error does not decrease further since the parareal-style additive correction has been omitted. 
Comparing to the examples with similar simulation parameters presented in the previous subsection, we see that the parareal-style correction
$$\mathcal{F}[u^{k-1}_{n-1},\dot{u}^{k-1}_{n-1}] - \theta^{k-1}\mathcal{C}[\mathcal{R}u^{k-1}_{n-1},\mathcal{R}\dot{u}^{k-1}_{n-1}]$$
is important, as it adds the missing amplitudes back to improve accuracy (when the solutions are properly aligned).

\subsection{Influence of interpolation and gradient approximation}\label{sec:interpinfluence}
\begin{figure}
    \centering
    \includegraphics[width=0.5\linewidth]{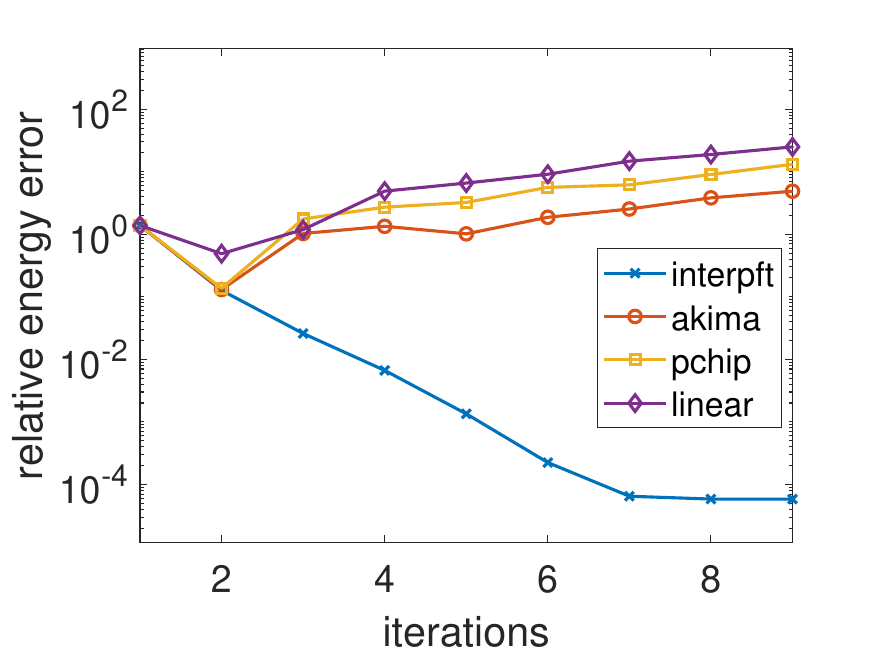}
    \caption{Error convergence of the proposed method using different interpolation schemes. 
    }
    \label{fig:interpmethods}
\end{figure}

So far in this section, we have only considered examples in which the coarse and fine propagators operate on the same spatial grid.  When these propagators are on two different grids, interpolation is needed to couple the solutions. In this subsection, we study the effect of interpolation. To illustrate this point, take coarse/fine grid ratio to be 2 and keep the discretization as before \bigskip
\begin{center}
\begin{tabular}{|c|c|c|c|c|c|c|c|c|c|}
\hline
 $T$ & $\Delta t_{com}$ & $\Delta x$ & $\Delta t/\Delta x$ & $\Delta x/\delta x$ &  $\Delta t / \delta t$ & $\mathcal{I}$ & $\nabla_h$ & tol\\ 
 \hline
 $10$ & $0.05$ & $0.01$ & $0.5$ & $10$ & $200$ & $*$ & 4 order & $10^{-14}$ \\
 \hline
\end{tabular}
\end{center}\bigskip
$*:\textcolor{blue}{\{\texttt{interpfft},\texttt{akima},\texttt{pchip},\texttt{linear}\}}.$ Input wave speed for coarse propagator is $c=1$ as well.
Figure \ref{fig:interpmethods} shows the error convergence with different methods for grid interpolation. We observe \reviewerone{particularly for this example} that the spectral interpolation $\texttt{interpft}$ performs \reviewerone{better than the lower order} methods because it resolves the initial wave form much better.

\begin{figure}
    \centering
    \includegraphics[width=0.5\linewidth]{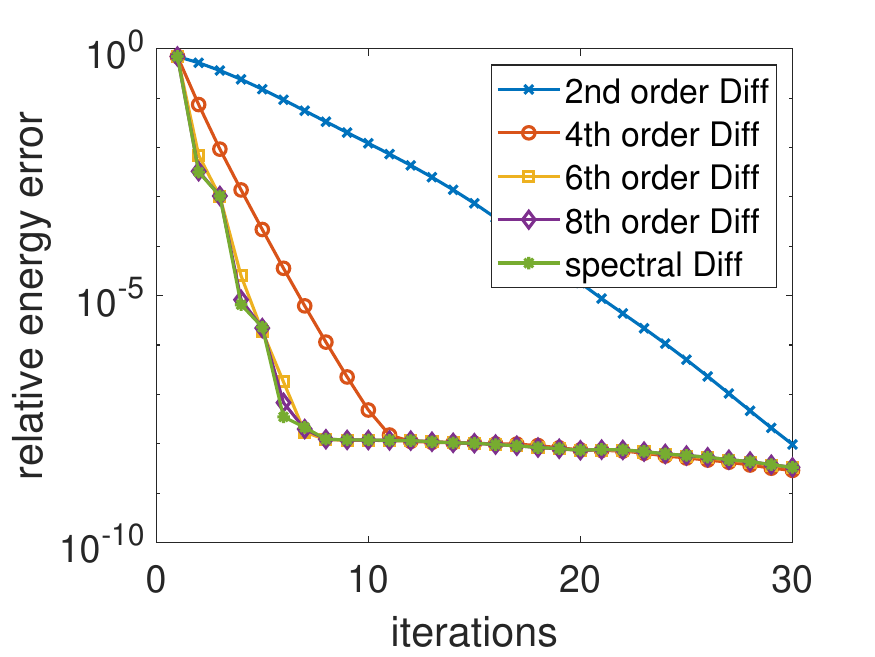}
    \caption{Relative energy errors at $T=10$, computed with different order of approximations to $\nabla_h U$. 
    }
    \label{fig:difforder}
\end{figure}
We also study the influence of the accuracy in approximating the gradient of the wavefield in forming the data matrices. We observe from the following examples that higher order approximations of gradient estimation accelerates  convergence rate of the proposed method. 
The parameters used in the simulations are tabulated below:
\begin{center}
\begin{tabular}{ |c|c|c|c|c|c|c|c|c|c|}
\hline
 $T$ & $\Delta t_{com}$ & $\Delta x$ & $\Delta t/\Delta x$ & $\Delta x/\delta x$ &  $\Delta t / \delta t$ & $\mathcal{I}$ & $\nabla_h$ & tol\\ 
 \hline
 $10$ & $0.05$ & $0.01$ & $0.5$ & $1$ & $20$ & \texttt{interpft} & $*$ order & $10^{-14}$\\
 \hline
\end{tabular}
\end{center}\bigskip
$*:\textcolor{blue}{\{2,4,6,8,spectral\}}.$
To isolate other factors that can also influence the convergence rate, the table below shows the relative residual in Sec \ref{sec:solnoptim} averaged over all iterations, denoted as $\Bigg \langle \|\mathsf{F} - \mathsf{\Omega}^k_*\mathsf{G} \|_{F} \Big/ \| \mathsf{F}\|_{F} \Bigg \rangle_{k}$. We see that the residual does not change while we increase the order of finite difference. 
In the last column, the errors in reconstruction of $U$ from the its approximated gradient is provided. To be specific, we denote operation in equation \eqref{eq:gradient2coordinate} as $Y:\nabla_h v \mapsto v$\reviewertwo{.}
\begin{center}
\begin{tabular}{ |c|c|c|}
\hline
 approx. order of $\nabla_h$ & $\Bigg \langle \dfrac{\|\mathsf{F} - \mathsf{\Omega}^k_*\mathsf{G} \|_{F}}{\| \mathsf{F}\|_{F}} \Bigg \rangle_{k}$ & $\max_{i,k} \dfrac{\| Y \nabla_h U^{k}_{i} - U^{k}_{i}\|_{2}}{\|U^{k}_{i}\|_2} $ \\ 
 \hline
 2 & $3.8634\cdot 10^{-3}$ & $ 2.9435\cdot 10^{-2} $ \\
 4 & $3.8101\cdot 10^{-3}$ & $ 1.4023\cdot 10^{-3} $ \\
 6 & $3.8079\cdot 10^{-3}$ & $ 8.2100\cdot 10^{-5} $ \\
 8 & $3.8077\cdot 10^{-3}$ & $ 5.8569\cdot 10^{-6} $ \\
 spectral & $3.8077\cdot 10^{-3}$ & $ 1.7706\cdot 10^{-12} $\\
 \hline
\end{tabular}
\end{center}
\bigskip
 Figure~\ref{fig:difforder} shows the convergence of errors for
 different central differencing and Fourier approximations for $\nabla_h$. 
 The ones with second order approximation has the slowest convergence rate, while those using sixth order or higher converge faster. 

\subsection{Random subsample of the data matrices} \label{sec:randsubsample}

We point out here that the cost of the stabilization can be further reduced by certain randomized algorithms \cite{tropp2019streaming,martinsson2017householder}. These randomized algorithms exploit the observed low-rank nature of our data matrices. Another approach is directly subsampling the data matrices. To illustrate the low-rank property, consider a plane wave in a 2D wave guide 
$$c(x,y) = 1-0.3\cos(2\pi x), ~~ -0.5\leq x \leq 0.5, -0.5\leq y \leq 0.5, $$ with the following discretization
    \bigskip
    \begin{center}
    \begin{tabular}{|c|c|c|c|c|c|}
    \hline
    $T$ & $\Delta t_{com}$ & $\Delta t$ & $\delta t$ & $N_{\Delta x}$ & $N_{\delta x}$  \\
    \hline
    $4$ & $0.02$ & $8\cdot 10^{-4}$ & $8\cdot 10^{-5}$ & $100\times 100$ & $200\times 200$ \\
    \hline
    \end{tabular}
    \end{center}
    \bigskip
    After each parallel computation of coarse and fine data $\mathsf{G},\mathsf{F}$, we plot the normalized strength of singular values of the correlation matrix $\mathsf{M}=\mathsf{F}\mathsf{G}^T$ for a few iterations in Figure \ref{fig:svM-randslice}.

\begin{figure}
    \centering
    \includegraphics[width=0.45\linewidth]{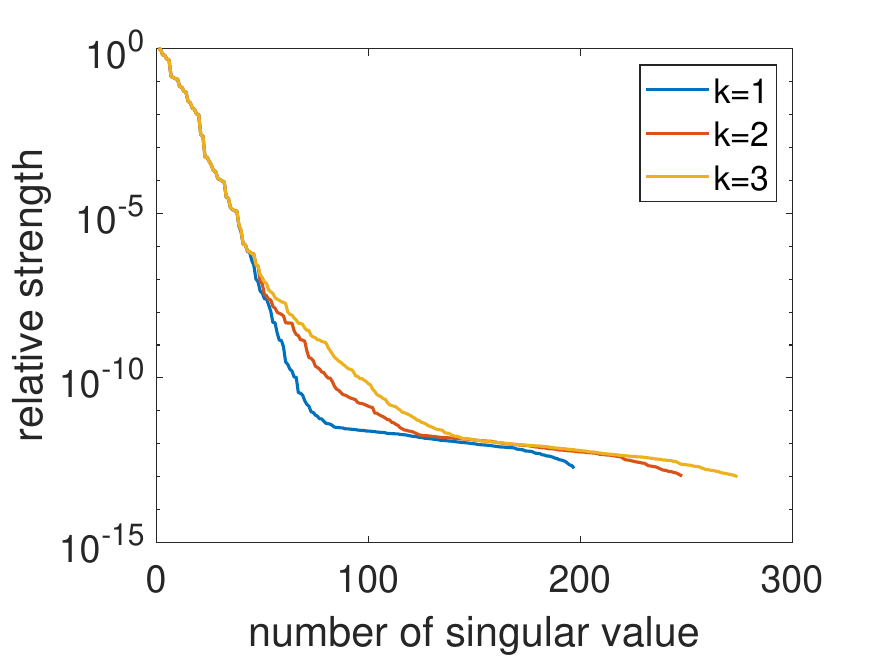}
    \includegraphics[width=0.45\linewidth]{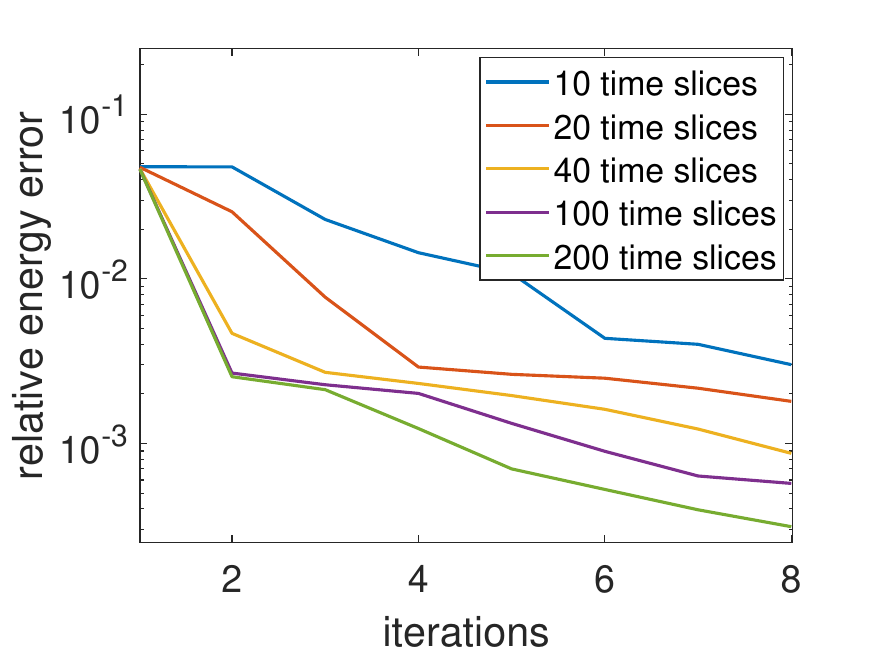}
    \caption{Low rank property of the data matrices in Section \ref{sec:randsubsample}. Left: normalized singular values of the correlation matrix $\mathsf{M}$. Right: relative energy errors at final time using random subsample of the data matrices.}
    \label{fig:svM-randslice}
\end{figure}

The normalized strength of the singular values drops exponentially in this particular example. A quick and simple strategy to exploit this low-rank property is to randomly sample time slices in matrices $\mathsf{F}$ and $\mathsf{G}$. By reducing the sample size, the data matrices becomes thinner so that QR factorization is faster. We compared the convergence of different sample sizes in Figure \ref{fig:svM-randslice}.

\section{Numerical Examples}
In this section,  we shall consider one and two dimension examples, including an example that involve a large scale wave speed model commonly used in the seismic migration community. When the spatial grid of coarse and fine are different, wave speed on the coarse grid is point wise evaluation of the given wave speed.

\subsection{One dimensional examples} \label{sec:oneDexample}
\begin{figure}
    \centering
    \includegraphics[width=0.45\linewidth]{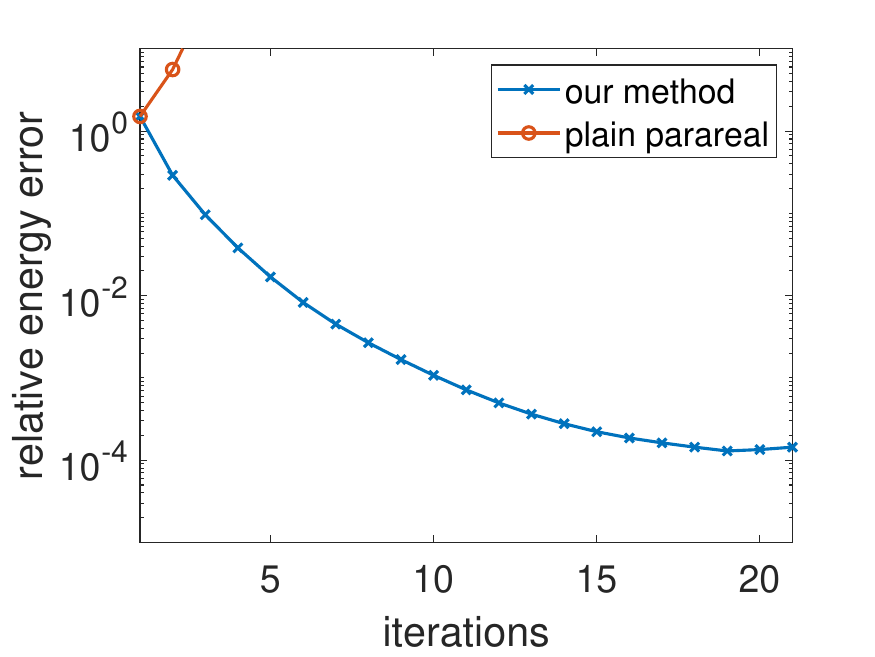}
    \includegraphics[width=0.45\linewidth]{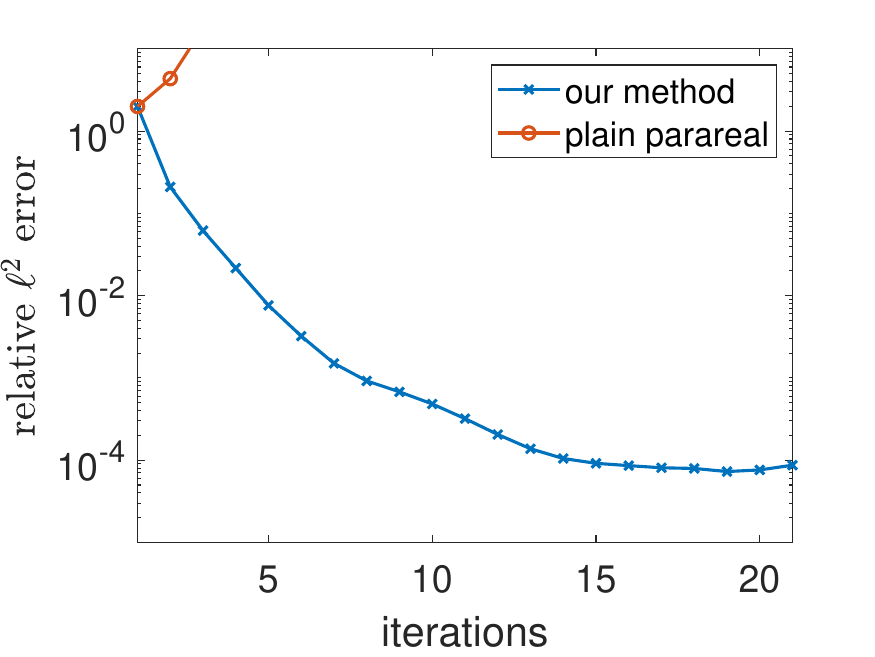}
    \caption{Relative error of the solutions computed in numerical example Sec.~\ref{sec:oneDexample}. Left: the energy error, right: the $\ell^2$ error. Our method, shown in cross solid line, generalizes beyond constant wave speed while the plain parareal method, shown in circle dash, diverges right away.}
    \label{fig:oneDvar}
\end{figure}

Consider a medium with the wave speed
$$
c(x)=1+0.25\cos(4\pi x),
$$
and the initial wavefield in $[-0.5,0.5]$
\begin{align*} 
 u(x,0) &=\cos(10\pi x)\exp(-100x^2),\\
 u_t(x,0)&=0.
 \end{align*}
 We present a numerical simulation using the parameters listed below:\bigskip

\begin{center}
\begin{tabular}{|c|c|c|c|c|c|c|c|c|c|}
\hline
 $T$ & $\Delta t_{com}$ & $\Delta x$ & $\Delta t/\Delta x$ & $\Delta x/\delta x$ &  $\Delta t / \delta t$ & $\mathcal{I}$ & $\nabla_h$ & tol\\ 
 \hline
 $10$ & $0.05$ & $0.01$ & $0.5$ & $10$ & $100$ & \texttt{interpft} & 4 order & $10^{-14}$\\
 \hline
\end{tabular}
\end{center}\bigskip
The fine propagator operate on a spatial grid which is 10 times finer than the coarse grid, and uses a CFL which is 10 times smaller. Figure~\ref{fig:oneDvar} shows convergence of the proposed method comparing to the plain parareal. 
Because fine and coarse solution in a variable medium may differ a lot, the plain parareal method becomes even more unstable.

\subsection{Two dimensional cases}
We apply the proposed method to three types of media: one with a smoothly varying wave speed (wave guide), one containing a piece-wise constant wave speed  (inclusion), and a more complicated wave speed profile which is often used in exploration seismology as a standard case study (Marmousi). 

\subsubsection{Waveguide} \label{sec:waveguide}
\begin{figure}
    \centering
    \includegraphics[width=0.45\linewidth]{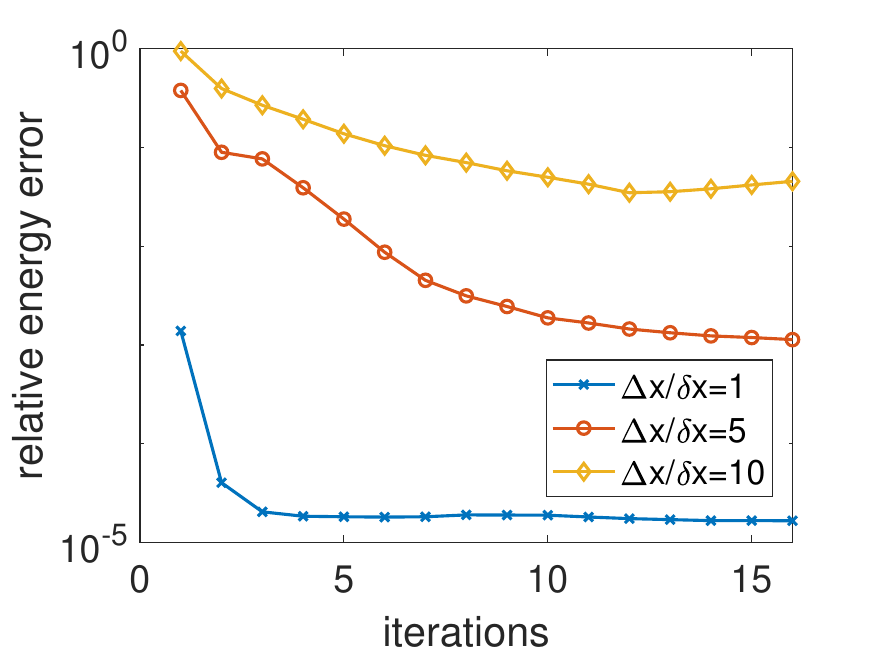}
    \includegraphics[width=0.45\linewidth]{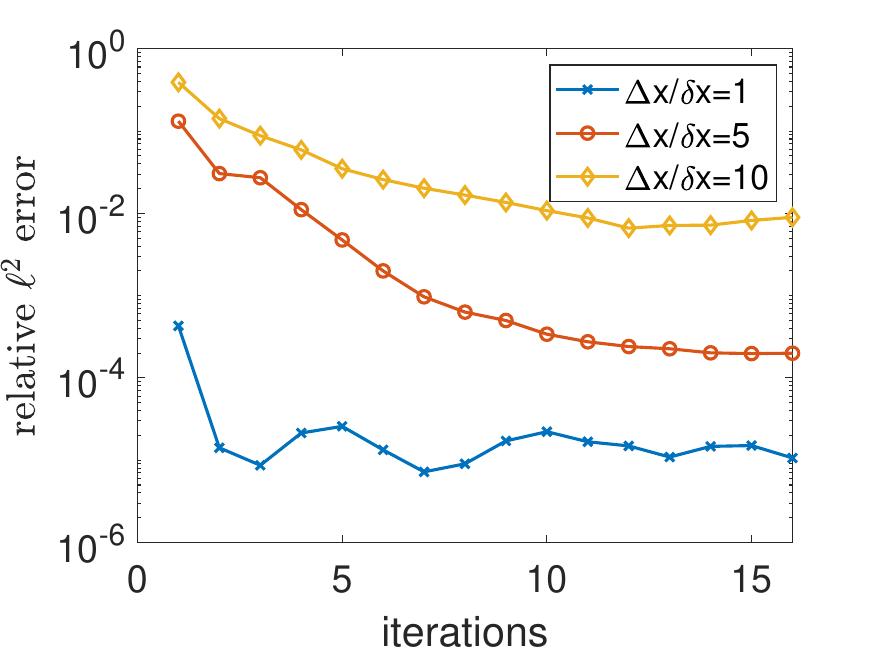}
    \caption{Relative error of parareal iterated solutions for the waveguide example in Sec \ref{sec:waveguide}. Left: the energy error, right: the $\ell^2$ error. }
    \label{fig:twoDwaveguide}
\end{figure}

We consider a wave guide in $xy$-plane $[-1,1]\times[-0.5,0.5]$ with the wave speed 
$$c(x,y)=1-0.3\cos(2\pi y).$$
The initial data is a plane wave traveling left to right along the $x$-axis:
\begin{align*}
u(x,y;0) &=\exp(-50(x+0.5)^{2}),\\
u_t(x,y;0) &=100\exp (-50(x+0.5)^{2}).
\end{align*}
The parameters used in the simulation are set as follow
\begin{center}
\begin{tabular}{|c|c|c|c|c|c|c|c|c|c|}
 \hline
 $T$ & $\Delta t_{com}$ & $\Delta x$ & $\Delta t/\Delta x$ & $\Delta x/\delta x$ &  $\lambda_{\Delta} / \lambda_{\delta}$ & $\mathcal{I}$ & $\nabla_h$ & tol\\ 
 \hline
 $5$ & $0.05$ & $0.005$ & $1/4$ & $\{1,5,10\}$ & $5$ & \texttt{interpft} & 4 order & $10^{-13}$\\
 \hline
\end{tabular}
\end{center}\bigskip

Figure \ref{fig:twoDwaveguide} shows error of the solution with different coarse fine grid ratio.

\subsubsection{Inclusion}\label{sec:inclusion-ex}
\begin{figure}
    \centering
    \includegraphics[width=0.45\linewidth]{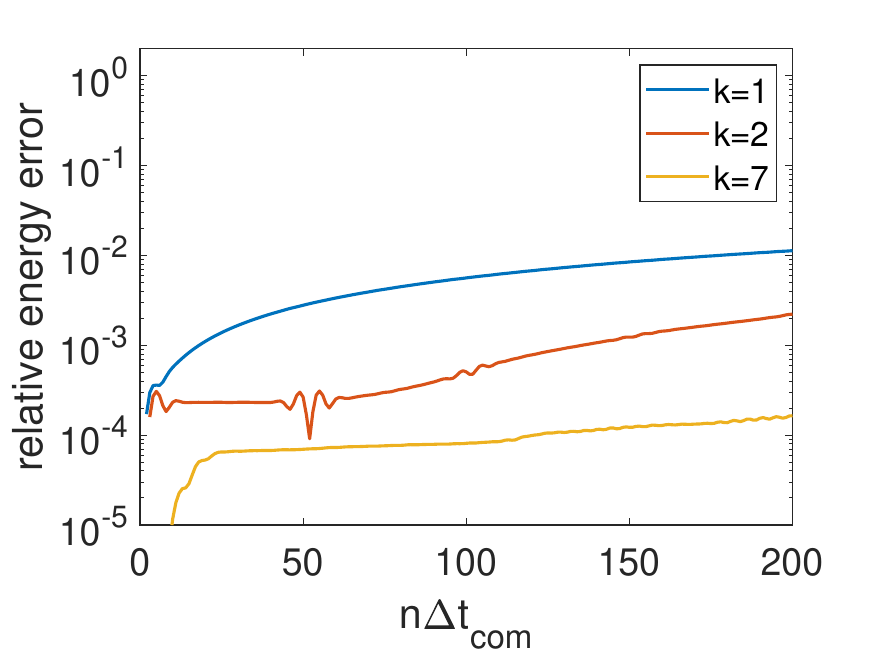}
    \includegraphics[width=0.45\linewidth]{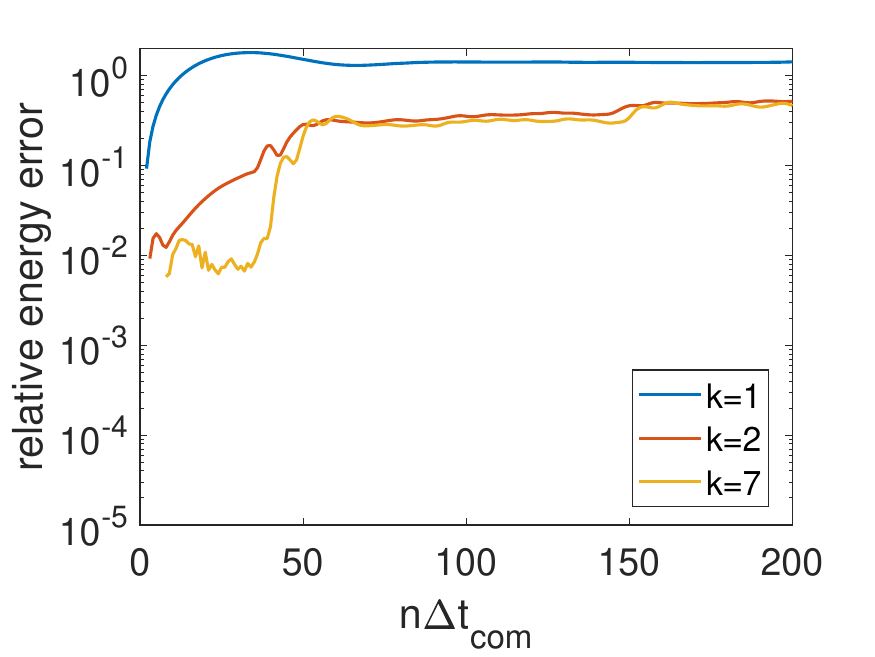}
    \caption{Relative error over time for the inclusion example described in Sec~\ref{sec:inclusion-ex}. The initial plane wave ``hits" the small inclusion at around $T=n\Delta t_{{com}}$. Left: the energy error for $\Delta x/\delta x=1$, right: the energy error for $\Delta x/\delta x=5$. }
    \label{fig:twoDincl_error}
\end{figure}

\begin{figure}
    \centering
    \includegraphics[width=0.45\linewidth]{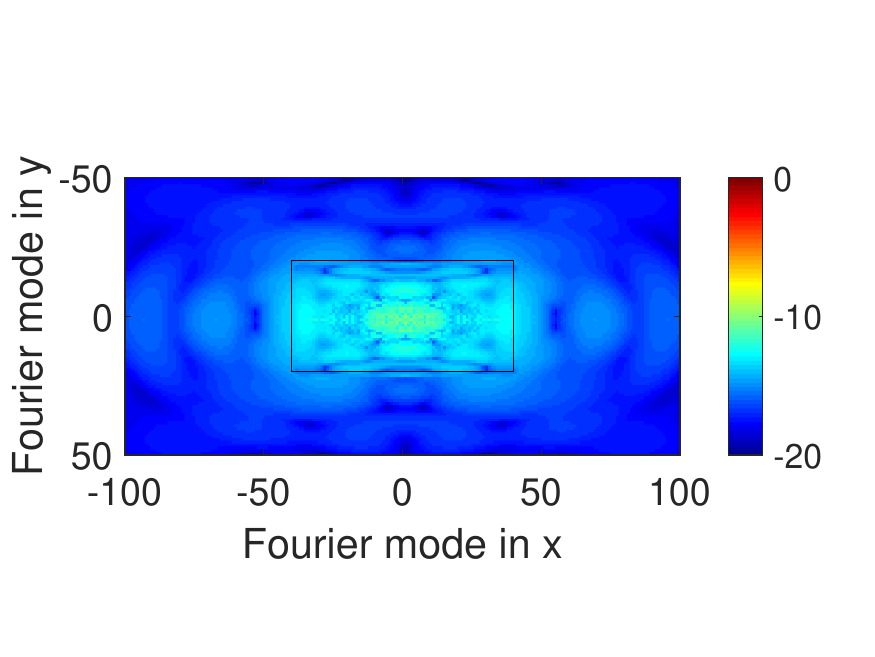}
    \includegraphics[width=0.45\linewidth]{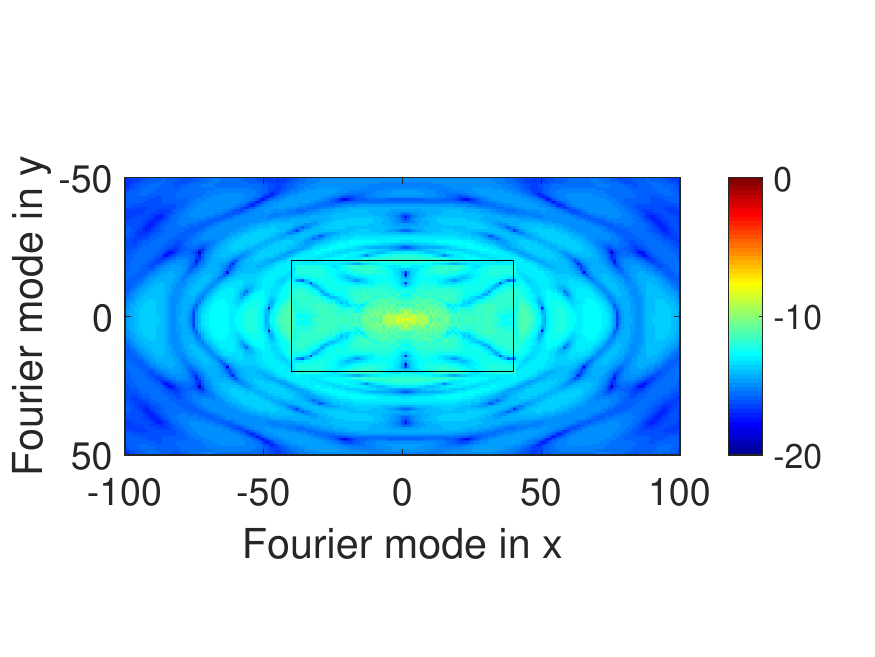} \\ 
    \includegraphics[width=0.45\linewidth]{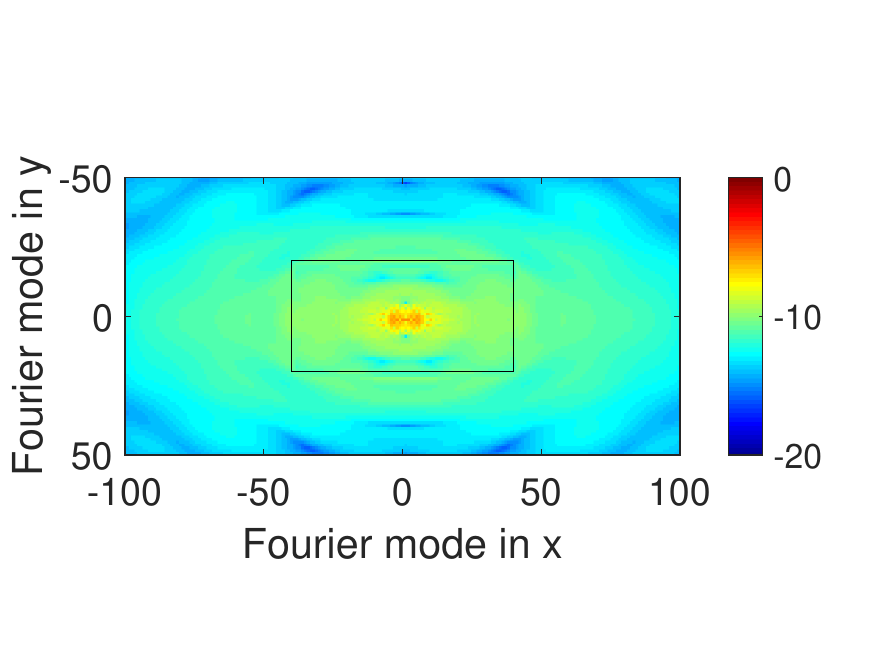}
    \includegraphics[width=0.45\linewidth]{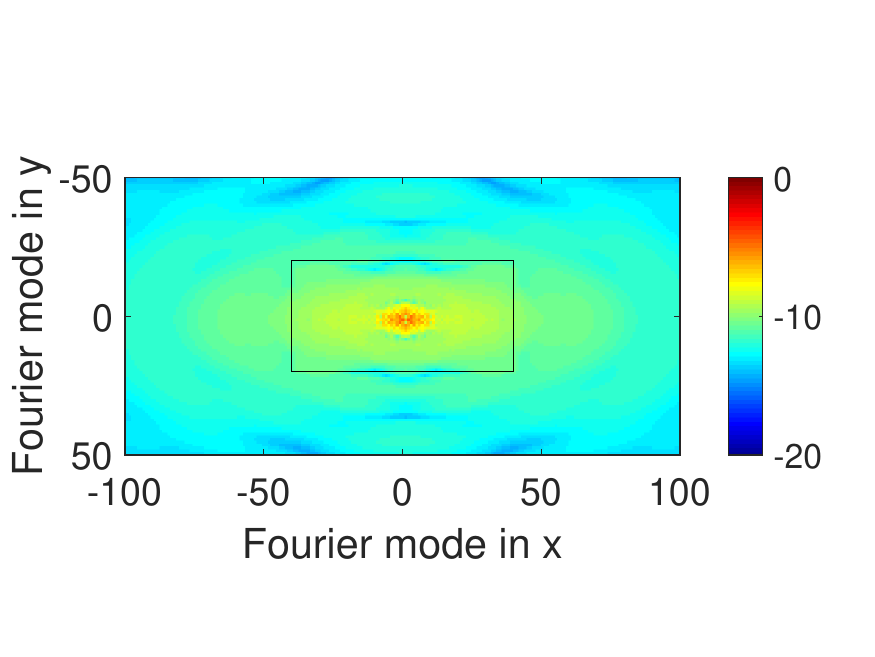}
    \caption{Relative density error of solution of inclusion example for $\Delta x/\delta x=5$ in Fourier modes $\dfrac{|\texttt{fft2}\{u^k_N(\xi_j) - u(\xi_j,t_N)\}|}{\sum_j |\texttt{fft2} \{u(\xi_j,t_N)\} |} $. The rectangular box indicates the Fourier domain of the coarse spatial grid. Top row, left: error at $n=15$, right: error at $n=40$. Bottom row, left: error at $n=140$, right: error at $n=200$.}
    \label{fig:twoDincl_Ferrorfield}
\end{figure}

In this example, we consider  the two dimensional domain in the $xy$-plane $[-1,1]\times[-0.5,0.5]$ where a plane wave encounters an inclusion of radius $\sqrt{0.002}$ centered at $[0.5,0.1]$, modeled by the wave speed
$$c(x,y)=1-0.9~\chi_{((x-0.5)^2+(y+0.1)^2)<0.002}.$$ We used the initial data traveling from left to right
\begin{align*}
    u(x,y;0) &=\cos(4\pi(x+0.5))\exp(-50(x+0.5)^2),\\
    u_t(x,y;0) &=\Big(-4\pi\sin(4\pi(x+0.5))+100(x+0.5)\cos(4\pi(x+0.5))\Big) \exp(-50(x+0.5)^2),
\end{align*}
and discretization parameters
\begin{center}
\begin{tabular}{ |c|c|c|c|c|c|c|c|c|c|}
\hline
 $T$ & $\Delta t_{com}$ & $\Delta x$ & $\Delta t/\Delta x$ & $\Delta x/\delta x$ &  $\lambda_{\Delta} / \lambda_{\delta}$ & $\mathcal{I}$ & $\nabla_h$ & tol \\ 
 \hline
 $4$ & $0.02$ & $0.005$ & $1/2$ & $\{1,5\}$ & $5$ & \texttt{interpft} & 4 order & $10^{-13}$\\
 \hline
\end{tabular}
\end{center}\bigskip

When the coarse grid is the same as fine grid, the iterations converge to the serial fine solution for the whole time interval (shown in left subplot of Figure~\ref{fig:twoDincl_error}). On the other hand, when coarse/fine grid ratio is $5$, the right subplot of Figure~\ref{fig:twoDincl_error} shows that the error escalates quickly at $n=50$ (or $t=1$), when the initial plane wave hits the inclusion for the first time, and again at $n=150$ (or $t=2$), as some parts of the initial plane wave wraps around the domain the interact with the inclusion again. The error does not decreasing for later iterations.  

Figure \ref{fig:twoDincl_Ferrorfield} shows the relative density error in the Fourier modes of the computed solution at different times. For the short time range $n=15$ (before the wave energy is scattered by the inclusion), most of the error concentrates at low frequencies which the coarse grid is able to resolve. Once the wave touches the inclusion at $n=40$ and thereafter $n=140,n=200$, the errors in the higher frequencies becomes significant. These scattered higher frequency wave is not resolved by the coarse grid and cannot be corrected by the proposed method.

\subsubsection{Marmousi experiment} \label{sec:Marm-ex}
\begin{figure}
    \centering
    \includegraphics[width=0.5\linewidth]{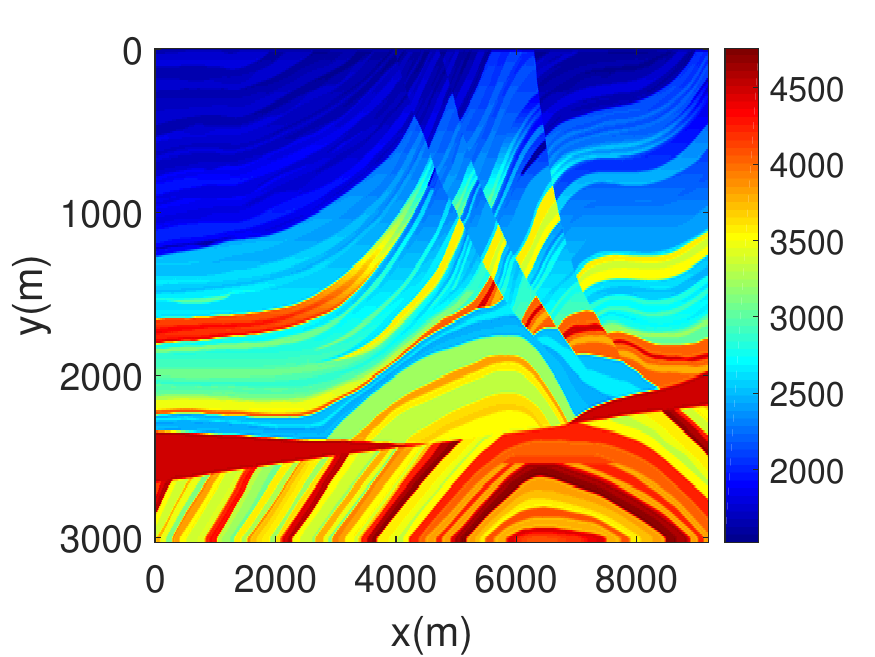}
    \caption{Marmousi wave speed model. Domain size is $3022 (m)\times 9192 (m)$ ($m$ denotes 'meter') and unit of wave speed is in meter per second.}
    \label{fig:twoDmarm_model}
\end{figure}

\begin{figure}
    \centering
    \includegraphics[width=0.45\linewidth]{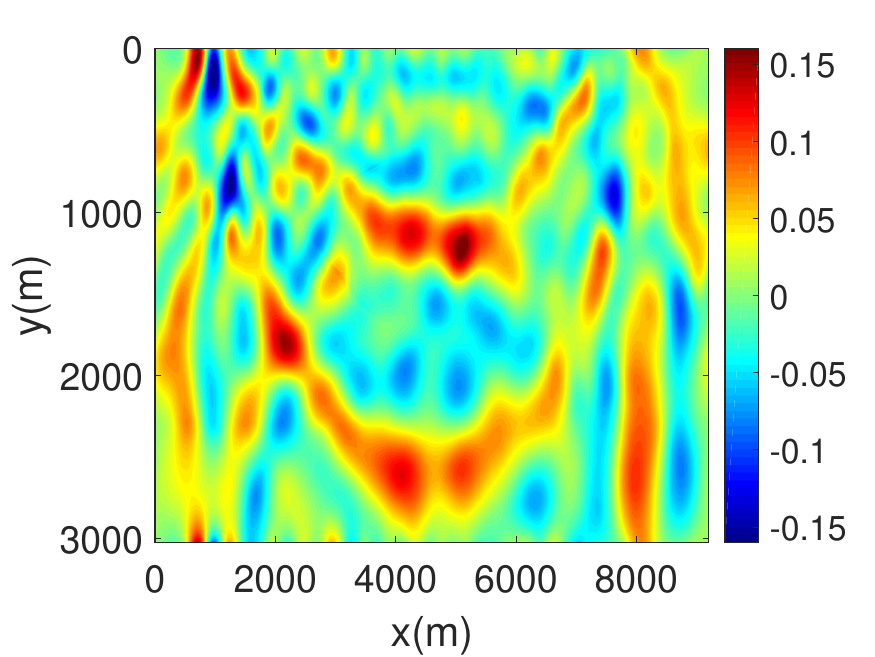}
    \includegraphics[width=0.45\linewidth]{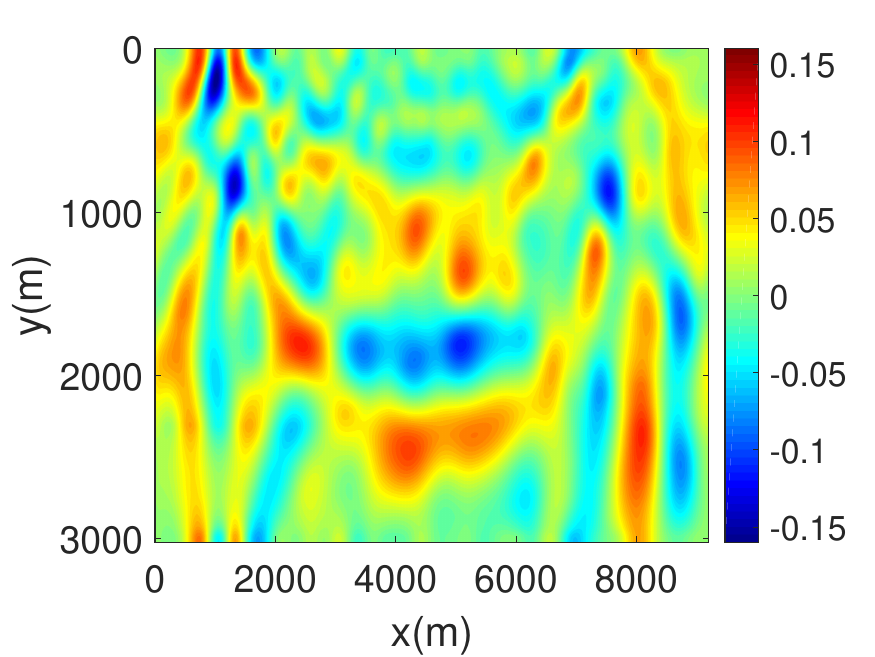} \\
    \includegraphics[width=0.45\linewidth]{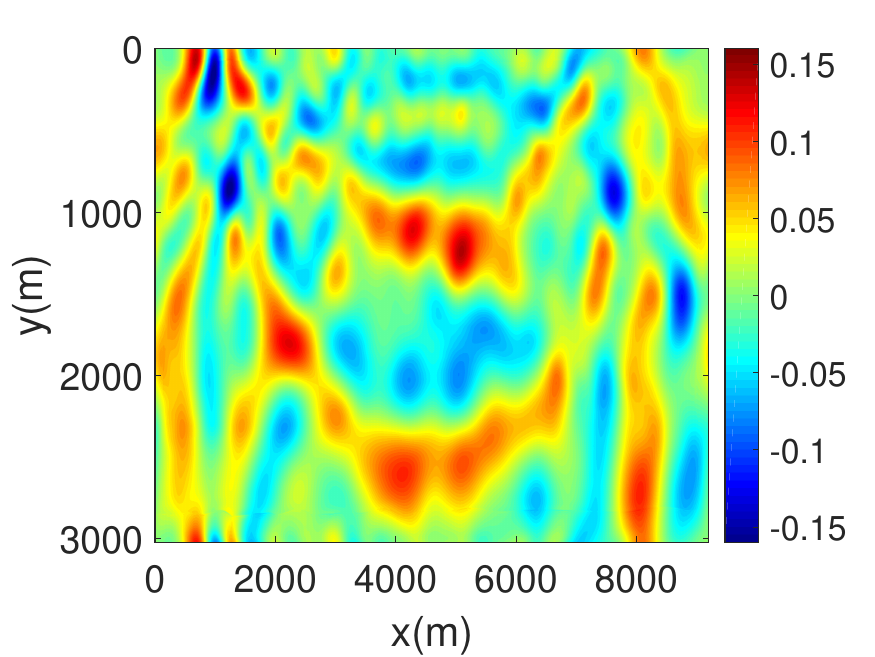}
    \includegraphics[width=0.45\linewidth]{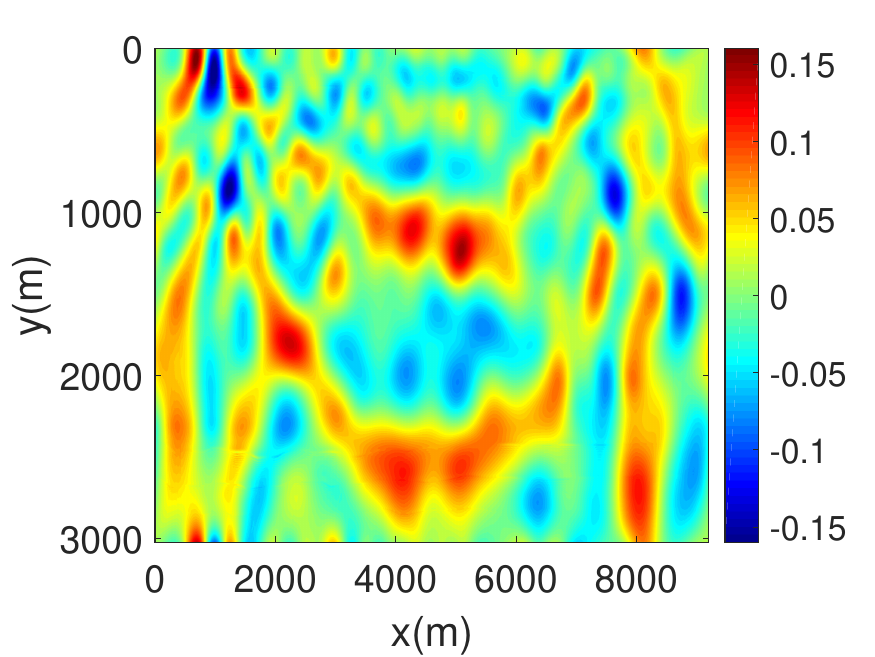}
    \caption{Solution at $T=2(s)$ of Marmousi example in Sec~\ref{sec:Marm-ex}. Top row, left: serial fine solution, right: serial coarse solution. Bottom row, left: $k=2$ iterated solution, right: $k=4$ iterated solution. }
    \label{fig:twoDmarm_solution}
\end{figure}

\begin{figure}
    \centering
    \includegraphics[width=0.45\linewidth]{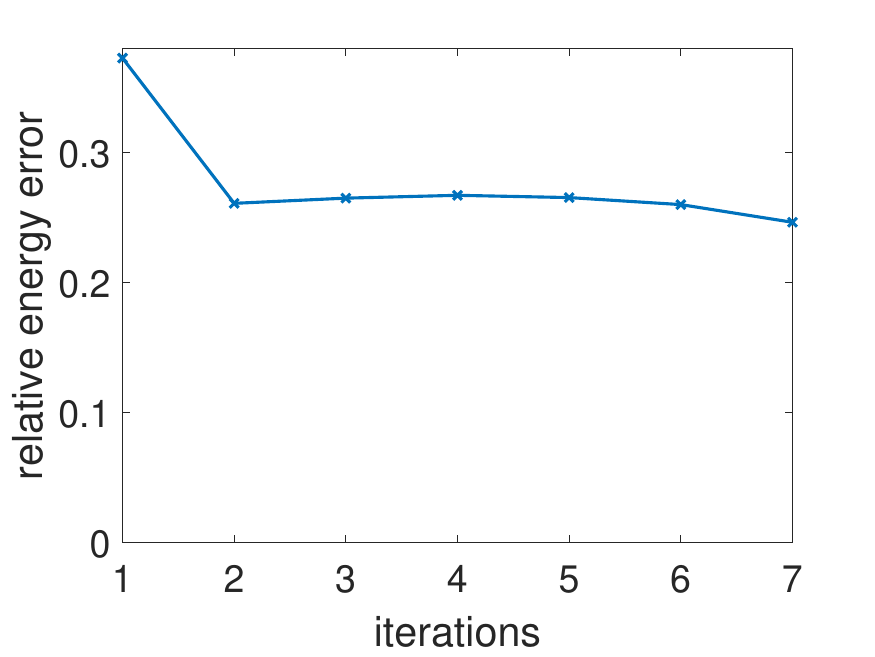}
    \includegraphics[width=0.45\linewidth]{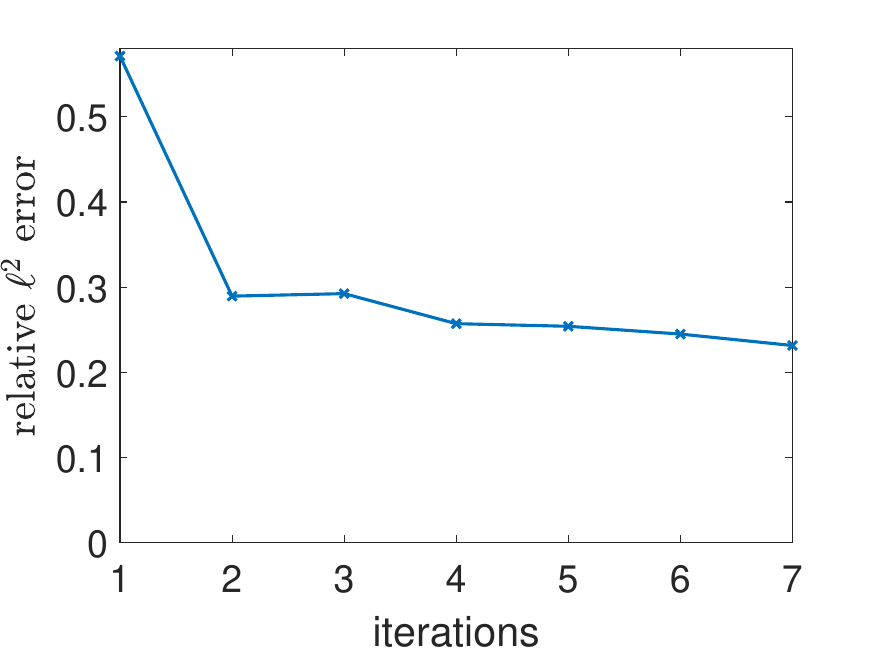}
    \caption{Relative errors by the proposed method applied to simulate wave propagation in the Marmousi model. Left: the energy error, right: the $\ell^2$ error.}
    \label{fig:twoDmarm_error}
\end{figure}

\begin{figure}
    \centering
    \includegraphics[width=0.45\linewidth]{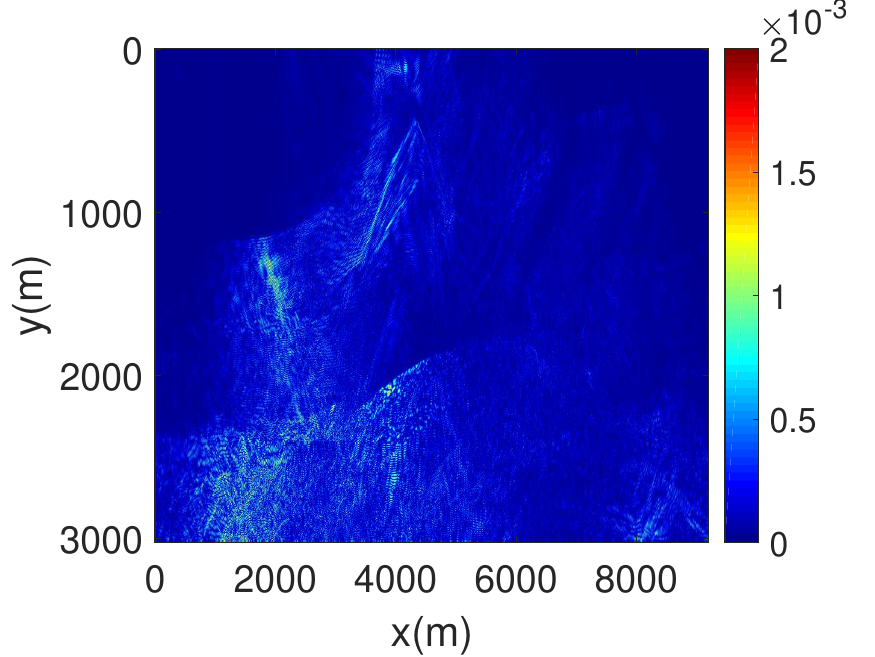}
    \includegraphics[width=0.45\linewidth]{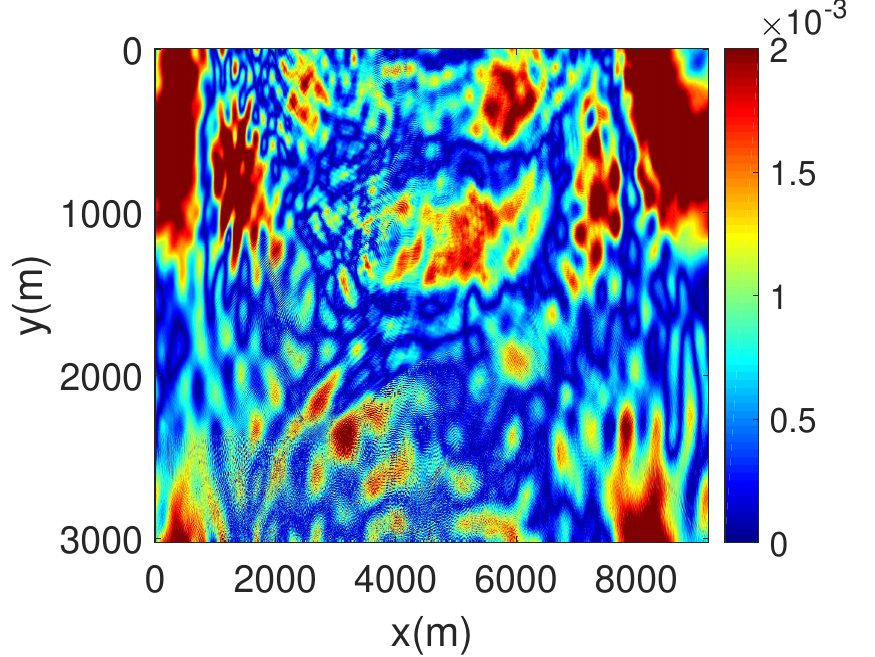} \\
    \includegraphics[width=0.45\linewidth]{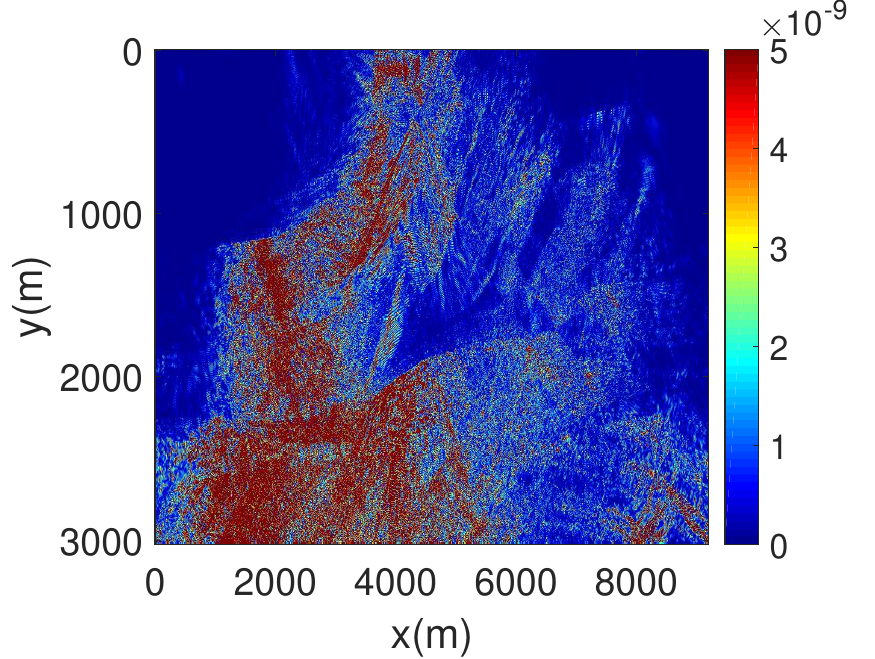}
    \includegraphics[width=0.45\linewidth]{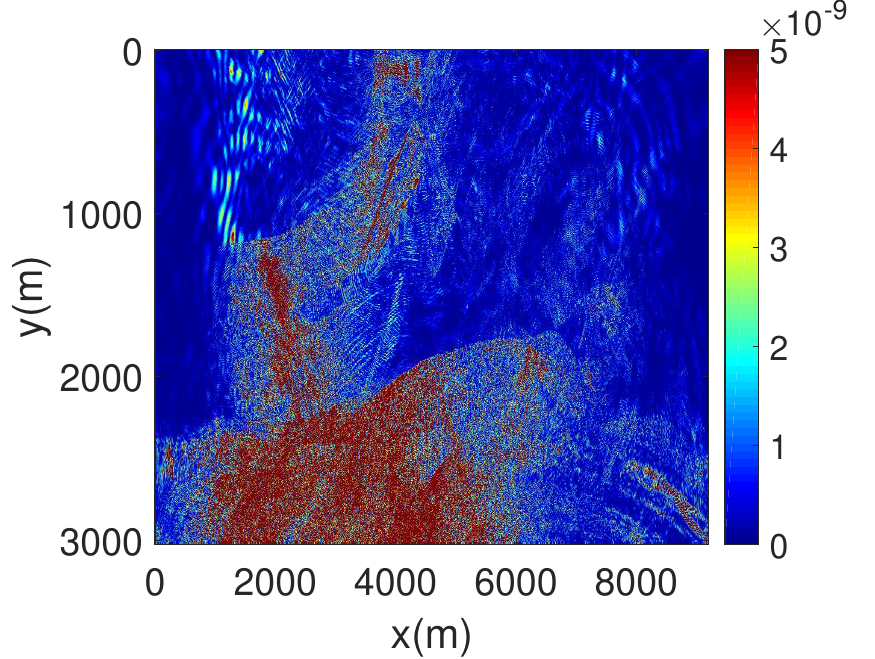}
    \caption{Absolute and energy of absolute error field at $T=2(s)$ when fine and coarse propagators run on the same grid for Marmousi example in Sec~\ref{sec:Marm-ex}. Top row, left: absolute error of wavefield for $k=1$ solution, right: absolute error of wavefield for $k=7$ solution. Bottom row, left: energy error for $k=1$ solution, right: energy error for $k=7$ solution. }
    \label{fig:twoDmarm_samegriderrorfield}
\end{figure}

\begin{figure}
    \centering
    \includegraphics[width=0.45\linewidth]{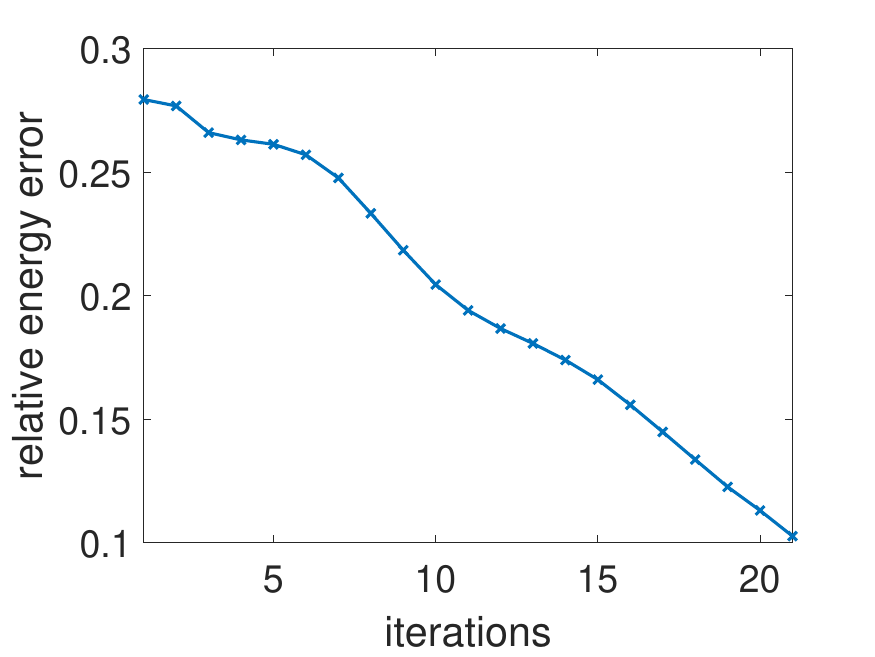}
    \includegraphics[width=0.45\linewidth]{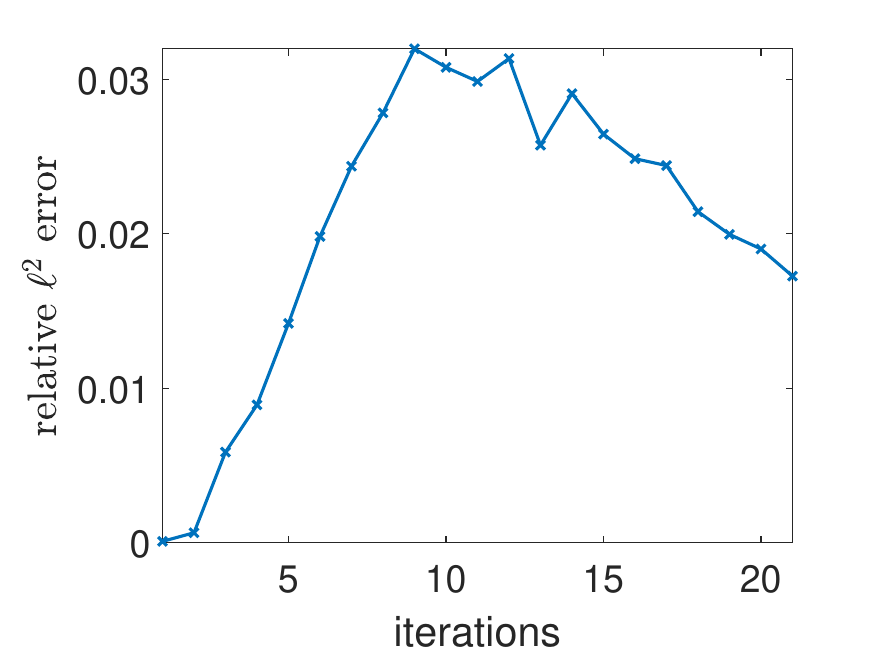}
    \caption{Relative errors when coarse and fine propagators run on the same grid for the Marmousi model. Left: the energy error, right: the $\ell^2$ error.}
    \label{fig:twoDmarm_samegriderror}
\end{figure}

We test our method with the Marmousi wave speed model \cite{brougois1990marmousi}, as shown in Figure \ref{fig:twoDmarm_model}. The fine scale domain has $2422\times 7367$ grid points while the coarse scale has $49 \times 147$ grid points or 50 times smaller in each dimension. The initial data is a pulse waveform centered at $x_0=(400m,3880m),$ where $m$ denotes the length unit in meter, 
\begin{align*}
u(x,y;0) &=\cos(0.01(x-x_0))\exp(-1.6\cdot10^{-5}((x-x_0)^2)),\\
u_t(x,y;0) & =0.
\end{align*}
The discretization parameters are in the following table where coarse and fine computation communicate every $500$ coarse time steps
\bigskip
\begin{center}
\begin{tabular}{ |c|c|c|c|c|c|c|c|c|c|}
\hline
 $T$ & $\Delta t_{com}$ & $\Delta x$ & $\Delta t/\Delta x$ & $\Delta x/\delta x$ &  $\Delta t / \delta t$ & $\mathcal{I}$ & $\nabla_h$ & tol\\ 
 \hline
 $2(s)$ & $0.05(s)$ & $62.45(m)$ & $1.6 \cdot 10^{-4}$ & $50$ & $500$ & \texttt{imresize} & 4 order & $10^{-10}$\\
 \hline
\end{tabular}
\end{center}\bigskip
The computation is executed on one node consisting of $20$ cores on the Stampede2 system at Texas Advanced Computing Center (TACC). 
With our non-optimized MATLAB code, it took 26 hours to run 6 parareal iterations and 12 hours to compute the serial fine solution. Hence each iteration takes about 4 hours, almost 3 times faster on the wall clock than the serial fine computation.
For more detailed experiment, see Section \ref{sec:timing}.

Figure \ref{fig:twoDmarm_solution} shows the solutions computed by the proposed method. 
One observes that some finer details are added back to the computed solution along the iterations. However, Figure~\ref{fig:twoDmarm_error} reveals that the errors decreases rather slowly after the first few iterations.
Indeed, the setup in this experiment is a challenging example of strong scattering due to discontinuities in the wave speed (compared to the previous Example).

It is natural to wonder if the proposed method computes solutions that would converge to
the serially computed fine solutions, \emph{when the coarse and fine propagators run on the same spatial grid.} For this purpose, it suffices to consider a smaller version of the Marmousi velocity model, which
is defined on  $485\times1474$ grid points. 
A different set of discretization parameters are described in the following table
\bigskip
\begin{center}
\begin{tabular}{ |c|c|c|c|c|c|c|c|c|c|}
\hline
 $T$ & $\Delta t_{com}$ & $\Delta x$ & $\Delta t/\Delta x$ & $\Delta x/\delta x$ &  $\Delta t / \delta t$ & $\mathcal{I}$ & $\nabla_h$ & tol\\ 
 \hline
 $2(s)$ & $0.05(s)$ & $6.245(m)$ & $3.2026\cdot 10^{-6}$ & $1$ & $10$ & \texttt{imresize} & 4 order & $10^{-10}$\\
 \hline
\end{tabular}
\end{center}\bigskip
Figure \ref{fig:twoDmarm_samegriderrorfield} shows the absolute error $|u^k_n -u(t_n)|$ and energy error fields at iterations $k=1$ and $k=7$. On the left column, we see that the solution at $k=1$ has larger point-wise absolute error and energy error in regions of high wave speed contrast (e.g. the lower left region in the image domain) than the regions of low wave speed contrast (e.g. upper left region in the image domain). On the right column, however, the solution at $k=7$ has large patches of point-wise absolute error at regions of low wave speed contrast. These errors contribute to the increase of overall $\ell^2$ error in the initial few iterations shown in the right subplot in  Figure~\ref{fig:twoDmarm_samegriderror}. 

We observe a discrepancy between the two errors curves. The energy error decreases while the $\ell^2$ error increases, particularly in the regions of low wave speed contrast. This discrepancy in regions of low wave speed contrast is likely due to the construction of the phase corrector. At regions of high contrast, when locally scattered wave emerged, the phase corrector is constructed to decrease the error there but because it is a global operator, it also perturbs solution everywhere else that in effect increases the overall error. 

\subsubsection{Timing}\label{sec:timing}
 To see how wall clock computing time changes as the number of cores changes, we use the Marmousi model again and the discretization parameters are as follow 
\bigskip
\begin{center}
\begin{tabular}{|c|c|c|c|c|c|}
    \hline
    $T$ & $\Delta t_{com}$ & $\Delta t$ & $\delta t$ & $N_{\Delta x}$ & $N_{\delta x}$  \\
    \hline
    $1,5,10$ & $0.05$ & $8\cdot 10^{-4}$ & $4\cdot 10^{-4}$ & $485\times 1474$ & $2422\times 7367$ \\
    \hline
\end{tabular}
\end{center}
\bigskip
We used an Intel Skylake node and varied the number of cores to perform the computation. In Table \ref{tab:timingparareal}, computing time in seconds is recorded for different parts in the algorithm: parallel computation, creation of the phase corrector (requiring QR), serial coarse update. We see the computing time of the stabilization process is small, relative to other parts of the algorithm. As a benchmark, we also timed the serial fine computation. The projected speed up is calculated as if the number of cores is equal to the number of time slices $n_{CPU} = N = T/\Delta t_{com}$.

\begin{center}
\begin{table}
    \centering
    \caption{Computing time (in seconds) of each part in our algorithm. Number of parareal iteration is 4. Projected speed up is calculated as if the number of CPUs is equal to the number of time slices. In the projected speed up calculation, we assume the time to create phase corrector does not change when the number of CPUs increases.}
    \begin{tabular}{|l|l|l|l|l|l|l|}
    \hline
    Cores & $N=T/\Delta t_{com}$ & \begin{tabular}[c]{@{}l@{}}Parallel \\ computation\end{tabular} & {\color[HTML]{FE0000} \begin{tabular}[c]{@{}l@{}}Creating \\ corrector\end{tabular}} & \begin{tabular}[c]{@{}l@{}}Serial \\ update\end{tabular} & \begin{tabular}[c]{@{}l@{}}Serial fine \\ computation\end{tabular} & \begin{tabular}[c]{@{}l@{}}Projected \\ speed up\end{tabular} \\ \hline
     & 20 & \begin{tabular}[c]{@{}l@{}}186.81\\ 180.84 \\ 180.31 \\ 180.35\end{tabular} & {\color[HTML]{FE0000} \begin{tabular}[c]{@{}l@{}}0.26 \\ 0.11 \\ 0.13 \\ 0.15\end{tabular}} & \begin{tabular}[c]{@{}l@{}}1.99 \\ 1.52 \\ 1.49 \\ 1.52\end{tabular} & 334.49 & \begin{tabular}[c]{@{}l@{}}8.44 \\ 4.32 \\ 2.91 \\ 2.18\end{tabular} \\ \cline{2-7} 
     & 100 & \begin{tabular}[c]{@{}l@{}}908.74 \\ 922.19 \\ 911.85 \\ 921.26\end{tabular} & {\color[HTML]{FE0000} \begin{tabular}[c]{@{}l@{}}0.48 \\ 0.49 \\ 0.64 \\ 0.74\end{tabular}} & \begin{tabular}[c]{@{}l@{}}8.64 \\ 8.46 \\ 8.72 \\ 8.71\end{tabular} & 1724.30 & \begin{tabular}[c]{@{}l@{}}37.92 \\ 18.88 \\ 12.57 \\ 9.40\end{tabular} \\ \cline{2-7} 
    \multirow{-3}{*}{4} & 200 & \begin{tabular}[c]{@{}l@{}}1807.23 \\ 1837.55 \\ 1819.07 \\ 1854.84\end{tabular} & {\color[HTML]{FE0000} \begin{tabular}[c]{@{}l@{}}0.81 \\ 1.06 \\ 3.14 \\ 1.77\end{tabular}} & \begin{tabular}[c]{@{}l@{}}17.13 \\ 17.90\\ 17.82 \\ 18.98\end{tabular} & 3479.63 & \begin{tabular}[c]{@{}l@{}}64.34 \\ 31.69 \\ 20.82 \\ 15.47\end{tabular} \\ \hline
     & 20 & \begin{tabular}[c]{@{}l@{}}119.61 \\ 119.06 \\ 129.25 \\ 119.51\end{tabular} & {\color[HTML]{FE0000} \begin{tabular}[c]{@{}l@{}}0.95 \\ 0.1 \\ 0.11 \\ 0.14\end{tabular}} & \begin{tabular}[c]{@{}l@{}}1.78 \\ 1.41 \\ 1.43 \\ 1.47\end{tabular} & 333.78 & \begin{tabular}[c]{@{}l@{}}7.83 \\ 3.98 \\ 2.60 \\ 1.96\end{tabular} \\ \cline{2-7} 
     & 100 & \begin{tabular}[c]{@{}l@{}}525.09 \\ 547.06 \\ 543.50 \\ 527.34\end{tabular} & {\color[HTML]{FE0000} \begin{tabular}[c]{@{}l@{}}0.46 \\ 0.49 \\ 0.57 \\ 0.73\end{tabular}} & \begin{tabular}[c]{@{}l@{}}8.27 \\ 7.78 \\ 8.31 \\ 8.33\end{tabular} & 1725.16 & \begin{tabular}[c]{@{}l@{}}35.12 \\ 17.34 \\ 11.49 \\ 8.63\end{tabular} \\ \cline{2-7} 
    \multirow{-3}{*}{8} & 200 & \begin{tabular}[c]{@{}l@{}}1023.37 \\ 1050.60 \\ 1032.15 \\ 1029.58\end{tabular} & {\color[HTML]{FE0000} \begin{tabular}[c]{@{}l@{}}0.99 \\ 1.08 \\ 1.39 \\ 1.94\end{tabular}} & \begin{tabular}[c]{@{}l@{}}16.26 \\ 16.53 \\ 17.80 \\ 20.47\end{tabular} & 3491.95 & \begin{tabular}[c]{@{}l@{}}60.02 \\ 29.64 \\ 19.58 \\ 14.43\end{tabular} \\ \hline
     & 20 & \begin{tabular}[c]{@{}l@{}}73.40 \\ 65.93 \\ 63.73 \\ 66.45\end{tabular} & {\color[HTML]{FE0000} \begin{tabular}[c]{@{}l@{}}0.26 \\ 0.11 \\ 0.11 \\ 0.13\end{tabular}} & \begin{tabular}[c]{@{}l@{}}2.41 \\ 1.51 \\ 1.53 \\ 1.58\end{tabular} & 332.83 & \begin{tabular}[c]{@{}l@{}}4.37 \\ 2.32 \\ 1.59 \\ 1.46\end{tabular} \\ \cline{2-7} 
     & 100 & \begin{tabular}[c]{@{}l@{}}337.38 \\ 331.03 \\ 335.75 \\ 345.86\end{tabular} & {\color[HTML]{FE0000} \begin{tabular}[c]{@{}l@{}}0.46 \\ 0.46 \\ 0.58 \\ 0.72\end{tabular}} & \begin{tabular}[c]{@{}l@{}}8.00 \\ 8.18 \\ 8.34 \\ 8.76\end{tabular} & 1734.02 & \begin{tabular}[c]{@{}l@{}}22.84 \\ 11.50 \\ 7.64 \\ 5.67\end{tabular} \\ \cline{2-7} 
    \multirow{-3}{*}{20} & 200 & \begin{tabular}[c]{@{}l@{}}656.80 \\ 644.41 \\ 655.77 \\ 653.80\end{tabular} & {\color[HTML]{FE0000} \begin{tabular}[c]{@{}l@{}}1.06 \\ 1.03 \\ 1.35 \\ 1.73\end{tabular}} & \begin{tabular}[c]{@{}l@{}}16.66 \\ 17.73 \\ 17.39 \\ 19.35\end{tabular} & 3492.86 & \begin{tabular}[c]{@{}l@{}}41.88 \\ 20.96 \\ 13.92 \\ 10.35\end{tabular} \\ \hline
    \end{tabular}
    \label{tab:timingparareal}
\end{table}
\end{center}

\section{Summary and conclusion}
We present here a new stable parareal-like method for the second order wave equation. The method uses the solutions computed along the iterations to construct linear operators which bridge the energy difference between the coarse and fine propagators. Such operators are referred to as the phase correctors in this paper. 
We presented an extensive set of numerical studies which aim at revealing the properties of the
proposed method. From the experiments, we see that the proposed method works well for constant and smooth wave speeds. 

For piece-wise smooth wave speeds, the algorithm is stable, but does not seem to produce numerical solutions that converge to the solutions computed by the fine propagators (as the number of iterations increase), when the fine and coarse propagators run on different spatial resolution. 
This is expected because the higher Fourier modes of the solutions computed by the fine propagator on a finer spatial grid cannot be resolved by coarser grids.
This is true even when the initial wavefield is resolved by the \reviewertwo{coarse grid}.
As our simulations reveal, the stagnation of the errors may be caused additionally by a couple of different approximations used in the algorithm. This paper outline these factors for future improvement.
In the last two examples involving piece-wise smooth wave speeds with high contrast, 
we observe that the relative errors are in general much larger than the previous cases. Most likely, this is due to strong local scattering of waves cause by the discontinuities in the wave speeds. Such scatterings cannot be corrected efficiently by the proposed Procrustean approach. 

\reviewerone{Finally, if domain decomposition in space is applied, due to the finite speed of propagation nature of wave, we expect that different phase correctors in the subdomains can be constructed in the same way and the resulting algorithm would be stable. This important topic should be investigated more carefully in a separate paper.}

\section*{Acknowledgment}
The authors are supported partially by NSF grants DMS-1620396 and DMS-1720171. 
Nguyen is supported by an ICES NIMS fellowship.
Part of this research was performed while the second author was visiting the Institute for Pure and Applied Mathematics (IPAM), which is supported by the National Science Foundation (Grant No. DMS-1440415). This work was partially supported by a grant from the Simons Foundation.
The authors thanks TACC for providing computing resources.
\bibliographystyle{abbrv}
\bibliography{main}

\end{document}